\newtheorem{Theorem}{Theorem}[section]
\newtheorem{Lemma}[Theorem]{Lemma}
\newtheorem{Proposition}[Theorem]{Proposition}
\newtheorem{Remark}[Theorem]{Remark}
\newtheorem{Property}[Theorem]{Property}
\begin{document}
\title{Minimum Fault-Tolerant, local and strong metric dimension of graphs}
\author{Muhammad Salman, Imran Javaid$^*$, Muhammad Anwar Chaudhry}
%\subjclass{Primary: , Secondary: }
\keywords{resolving set, fault-tolerant resolving set, local resolving set, strong resolving set, ILP formulations, convex polytopes.\\
\indent 2010 {\it Mathematics Subject Classification:} 05C12\\
\indent $^*$ Corresponding author: ijavaidbzu@gmail.com}
%\indent This research of the authors was partially supported by the
%Higher Education Commission of\\
%\indent Pakistan}
\address{Centre for Advanced Studies in Pure and Applied Mathematics,
Bahauddin Zakariya University Multan, Pakistan.}
\address{E-Mail: \{solo33, ijavaidbzu\}@gmail.com, chaudhry@bzu.edu.pk}
%\address{2 College of Computer and Information Systems, Jazan
%University, Jazan, KSA.}
%\address{E-Mail: ahmadsms@gmail.com}

\date{}
\maketitle
%%%---------------------------------------------------------------------
\begin{abstract}
In this paper, we consider three similar optimization problems: the
fault-tolerant metric dimension problem, the local metric dimension
problem and the strong metric dimension problem. These problems have
applications in many diverse areas, including network discovery and
verification, robot navigation and chemistry, etc. We give integer
linear programming formulations of the fault-tolerant metric
dimension problem and the local metric dimension problem. Also, we
study local metric dimension and strong metric dimension of two
convex polytopes $S_n$ and $U_n$.
\end{abstract}

%%---------------------------------------------------------------------------
\section{Introduction.}
The metric dimension problem was introduced independently by Slater
\cite{slater} and Harary and Melter \cite{harary}. Roughly speaking,
the metric dimension of an undirected and connected graph $G$ is the
minimum cardinality of a subset $W$ of vertex set of $G$ with the
property that all the vertices of $G$ are uniquely determined by
their shortest distances to the vertices in $W$. The metric
dimension problem has been widely investigated. Since the complete
survey of all the applications and results is out of scope of this
paper, only some applications and recent results are revealed.

The metric dimension arises in many diverse areas, including
telecommunications \cite{beer}, connected joints in graphs and
chemistry \cite{chartrand}, the robot navigation \cite{khuller} and
geographical routing protocols \cite{liu}, etc. In the area of
telecommunication, especially interesting in the metric dimension
problem application to network discovery and verification
\cite{beer}. Due to its fast dynamic, distributed growth process, it
is hard to obtain an accurate map of the global network. A common
way to obtain such maps is to make certain local measurements at a
small subset of the nodes, and then to combine them in order to
discover the actual graph. Each of these measurements is potentially
quite costly. It is thus a natural objective to minimize the number
of measurements, which still discover the whole graph. That is, to
determine the metric dimension of the graph. In \cite{beer}, simple
greedy strategies were used in a simulation with various types of
randomly generated graphs. The results of the simulation were
presented as two dimensional diagrams displaying the average number
of measurements (cardinality of a resolving set) as a function of
the degree of a particular graph class.

An application of the metric dimension problem in chemistry is
described in \cite{chartrand}. The structure of a chemical compound
can be represented as a labeled graph where the vertex and edge
labels specify the atoms and bond types, respectively. Under the
traditional view, it can be determine whether any two compounds in
the collection share the same functional property at a particular
position. These positions simply reflect uniquely defined atoms
(vertices) of the substructure (common subgraph). It is important to
find smallest number of these positions which is functionally
equivalent to the metric dimension of the given graph. This
observation can be used in drug discovery when it is to be
determined whether the features of a compound are responsible for
its pharmacological activity. For more details see \cite{chartrand}.

An other interesting application of the metric dimension problem
arises in robot navigation \cite{khuller}. Suppose that a robot is
navigating in a space modeled by a graph and wants to know its
current position. It can send a signal to find out how far it is
form each among a set of fixed landmarks. The problem of computing
the minimum number of landmarks and their positions such that the
robot can always uniquely determine its location is equivalent to
the metric dimension problem.

Now, we formally state the metric dimension problem as follows:
Given a simple connected graph $G$ with vertex set $V(G)$ and edge
set $E(G)$. Let $d(u,v)$ denotes the {\it distance} between vertices
$u$ and $v$, $i.e$, the length of a shortest $u-v$ path. A vertex
$w$ of $G$ {\it resolves} the vertices $u$ and $v$ in $G$ if
$d(u,w)\neq d(v,w)$. A subset $W = \{w_1,w_2,\ldots, w_k\}$ of
$V(G)$ is a {\it resolving set} of $G$ if every two distinct
vertices of $G$ are resolved by some vertex of $G$. A {\it metric
basis} of $G$ is a resolving set of the minimum cardinality. The
{\it metric dimension} of $G$, denoted by $\beta(G)$, is the
cardinality of its metric basis.

Metric dimension of several interesting classes of graphs have been
investigated: Grassmann graphs \cite{bailey1}, Johnson and Kneser
graph \cite{bailey2}, cartesian product of graphs \cite{caceres2},
Cayley digraphs \cite{fehr}, convex plytopes \cite{imran},
generalized Petersen graphs \cite{javaid1,javaid2}, Cayley graphs
\cite{javaid3}, silicate networks \cite{manuel}, circulant graphs
\cite{salman}. It also has been shown that some infinite graphs have
infinite metric dimension \cite{caceres1}.

Elements of metric bases were referred to as censors in an
application given in \cite{chartrand6}. If one of the censors does
not work properly, we will not have enough information to deal with
the intruder (fire, thief, etc). In order to overcome this kind of
problems, concept of fault-tolerant metric dimension was introduced
by Hernando {\em et al.} \cite{hernando1}. Fault-tolerant resolving
set provide correct information even when one of the censors is not
working. Roughly speaking, a resolving set is said to be
fault-tolerant if the removal of any element from it keeps it
resolving. Formally, a resolving set $W$ of a graph $G$ is said to
be {\it fault-tolerant} if $W\setminus\{w\}$ is also a resolving set
of $G$, for each $w$ in $W$. The {\it fault-tolerant metric
dimension} (FTMD) of $G$ is the minimum cardinality of a
fault-tolerant resolving set, denoted by $\beta'(G)$. A
fault-tolerant resolving set of cardinality $\beta'(G)$ is called a
{\it fault-tolerant metric basis} (FTMB) of $G$.

A more common problem in graph theory concerns distinguishing every
two neighbors in a graph $G$ by means of some coloring rather than
distinguishing all the vertices of $G$ by graph coloring. Since
distinguishing all the vertices of a connected graph $G$ has been
studied with the aid of distances in $G$. This suggests the topic of
using distances to distinguish the two vertices in each pair of
neighbors only, and thus Okamoto {\em et al.} \cite{okamoto}
introduced the local metric dimension problem, defined as follows: A
subset $W$ of vertex set of a connected graph $G$ is called a {\it
local resolving set} of $G$ if every two adjacent vertices of $G$
are resolved by some element of $W$. A {\it local metric basis} of
$G$ is a local resolving set of the minimum cardinality. The {\it
local metric dimension} of $G$, denoted by $lmd(G)$, is the
cardinality of its local metric basis. Note that each resolving set
of $G$ is {\it vertex-distinguishing} (since it resolves every two
vertices of $G$), and each local resolving set is {\it
neighbor-distinguishing} (since it resolves every two adjacent
vertices of $G$). Thus every resolving set is also a local resolving
set of $G$, so if $G$ is a non-trivial connected graph of order $n$,
then
%\begin{Equation}\label{eq1}
$$\hphantom{aaaaaaaaaaaaaaaaaaaaa} 1\leq lmd(G)\leq \beta(G)\leq n-1.\hphantom{aaaaaaaaaaaaaaaaaaaaa} (1)$$
%\end{Equation}
The strong metric dimension problem was introduced by Seb\"{o} and
Tannier \cite{sebo} and further investigated by Oellermann and
Peters-Fransen \cite{oellermann}. Recently, the strong metric
dimension of distance hereditary graphs has been studied by May and
Oellermann \cite{may}. This concept is defined as follows: A vertex
$w$ {\it strongly resolves} two distinct vertices $u$ and $v$ of $G$
if $u$ belongs to a shortest $v-w$ path or $v$ belongs to a shortest
$u-w$ path, $i.e.$, $d(v,w) = d(v,u)+d(u,w)$ or
$d(u,w)=d(u,v)+d(v,w)$. A subset $S$ of $V(G)$ is a {\it strong
resolving set} of $G$ if every two distinct vertices of $G$ are
strongly resolved by some vertex of $S$. A {\it strong metric basis}
of $G$ is a strong resolving set of the minimum cardinality. The
{\it strong metric dimension} of $G$, denoted by $sdim(G)$, is the
cardinality of its strong metric basis. It is easy to see that if a
vertex $w$ strongly resolves vertices $u$ and $v$, then $w$ also
resolves these vertices. Hence every strong resolving set is a
resolving set and $\beta(G)\leq sdim(G)$.

To determine whether a given set $W\subseteq V(G)$ is a local
(strong) resolving set of $G$, $W$ needs only to be verified for the
vertices in $V(G)\setminus W$ since every vertex $w \in W$ is the
only vertex of $G$ whose distance from $w$ is $0$.

The metric dimension of convex polytopes $S_n, T_n$ and $U_n$, which
are combinations of two graphs of convex polytopes, has been studied
in \cite{imran}. Also the strong metric dimension of $T_n$ has been
studied in \cite{kratica}. In this paper, we study the minimal local
resolving sets and strong resolving sets of the convex polytopes
$S_n$ and $U_n$. We prove that for all $n\geq 3$, $lmd(U_n) = 2$ and
for all $n\geq 3$,
$$ lmd(S_n) = \left\{
              \begin{array}{ll}
           2,         &\,\,\,\,\,\,\ \mbox{if}\ n\ \mbox{is odd},\\
           3,         &\,\,\,\,\,\,\ \mbox{if}\ n\ \mbox{is even},
            \end{array}
             \right.
$$
while the strong metric dimension of both families of convex
polytopes $S_n$ and $U_n$ depends on $n$.

The paper is organized as follows: In section 2, we give integer
linear programming formulations of the fault-tolerant metric
dimension problem and the local metric dimension problem. In section
3 and 4, we get explicit expressions for $lmd(S_n), sdim(S_n),
lmd(U_n)$ and $sdim(U_n)$. In what follows, the indices after $n$
will be taken modulo $n$.

%%%-------------------------------------------------------------------------
\section{Mathematical Programming Formulations}

As described in \cite{cvetk}, it is useful to represent problems of
extremal graph theory as integer linear programming (ILP) problems
in order to use the different well-known optimization techniques.
Following that idea, two integer linear programming formulations of
the metric dimension problem were proposed by Chartrand {\em et al.}
in 2000 \cite{chartrand}, and Currie and Oellermann in 2001
\cite{currie}. Recently, in 2012, Mladenovi\'{c} {\em et al.}
\cite{mlad} proposed a new mathematical programming formulation of
the metric dimension problem with new objective function which
(instead of minimizing the cardinality of a resolving set) minimized
the number of pairs of vertices from $G$ that are not resolved by
vertices of a set with a given cardinality. So the difficulty that
arises when solving the plateaux problem, $i. e.$, problems with a
large number of solutions with the same objective function values,
vanishes with the new objective function. The integer linear
programming formulation of the strong metric dimension problem was
proposed by Kratica {\em et al.} in 2012 \cite{kratica}. To our
knowledge, the following ILP formulations of the FTMD problem and
the local metric dimension problem are new.

%%%------------------------------------------------------------------
\subsection{Fault-Tolerant Metric Dimension Problem}
The following result was proved by Javaid {\em et al.} in
\cite{javaid4}.

\begin{Lemma}{\em \cite{javaid4}}\label{lem} A resolving set $W$ of a graph $G$ is fault-tolerant
if and only if every pair of vertices in $G$ is resolved by at least
two elements of $W$.
\end{Lemma}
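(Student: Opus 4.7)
The plan is to prove the lemma by a direct argument in each direction, using only the definitions of \emph{resolving set} and \emph{fault-tolerant resolving set}.

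For the forward implication, I would assume that $W$ is a fault-tolerant resolving set and fix an arbitrary pair of distinct vertices $u,v\in V(G)$. Since $W$ is resolving, there must exist some $w_1\in W$ that resolves $u,v$. Now I invoke the fault-tolerance hypothesis applied to the specific element $w_1$: the set $W\setminus\{w_1\}$ is still a resolving set, so it contains some $w_2$ resolving $u,v$. By construction $w_2\neq w_1$, giving two distinct elements of $W$ resolving the pair $u,v$. Since the pair was arbitrary, every pair is resolved by at least two elements of $W$.

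For the backward implication, I would assume that every pair of vertices in $G$ is resolved by at least two elements of $W$; in particular $W$ is itself a resolving set. To establish fault-tolerance I must show that $W\setminus\{w\}$ is resolving for every $w\in W$. Fix $w\in W$ and any two distinct vertices $u,v$. By hypothesis there are (at least) two distinct elements $w_1,w_2\in W$ resolving $u,v$; since $w_1\neq w_2$, at most one of them can equal $w$, so at least one lies in $W\setminus\{w\}$. Hence $W\setminus\{w\}$ resolves $u,v$, completing the proof.

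Neither direction presents a real obstacle: the argument is essentially a pigeonhole observation packaged around the definitions. The only thing to be careful about is the order of quantifiers in the backward direction, namely fixing $w$ first and then choosing the pair $u,v$, so that the two resolvers for $u,v$ may depend on $w$; this is harmless because the hypothesis guarantees two resolvers for every pair independently of $w$.
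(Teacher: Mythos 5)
Your proof is correct in both directions: the forward implication correctly extracts a second resolver from the resolving set $W\setminus\{w_1\}$, and the backward implication's pigeonhole step (two distinct resolvers cannot both equal the deleted vertex $w$) is exactly right. Note that the paper itself gives no proof of this lemma --- it is quoted from Javaid \emph{et al.} \cite{javaid4} --- but your argument is the standard one for this equivalence and is complete as written.
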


Thus, we have the following remark:

\begin{Remark}\label{rem}
Any pair $(u,v)$ of distinct vertices of a connected graph $G$ is
said to be fault-tolerantly resolved in $G$ if for two distinct
vertices $x,y$ of $G$, we have $d(u,x)\neq d(v,x)$ and $d(u,y)\neq
d(v,y)$.
\end{Remark}

Given a simple connected undirected graph $G = (V(G), E(G))$, where
$V(G)=\{1,2,\ldots, n\}$ and $|E(G)|=m$. It is easy to determine the
length $d(u,v)$ of a shortest $u-v$ path for all $u,v\in V(G)$ using
any shortest path algorithm. The coefficient matrix $A$ is defined
as follows:

$$ A_{(u,v),(i,j)} = \left\{
              \begin{array}{ll}
           1,         &\,\,\,\,\,\,\ d(u,i)\neq d(v,i)\ \mbox{and}\ d(u,j)\neq d(v,j),\\
           0,         &\,\,\,\,\,\,\
           d(u,i)=d(v,i)\ \mbox{and}\ d(u,j)=d(v,j),\hphantom{aaaaaaaaaaaaaaaaaaa}(2)
            \end{array}
             \right.
$$
where $1\leq u< v\leq n$, $1\leq i< j\leq n$. Variable $x_i$
described by $(3)$ determines whether vertex $i$ belongs to a
fault-tolerant resolving set $W$ or not. Similarly, $y_{ij}$
determines whether both $i,j$ are in $W$.
$$ x_i = \left\{
              \begin{array}{ll}
           1,         &\,\,\,\,\,\,\ i\in W,\\
           0,         &\,\,\,\,\,\,\
           i\not \in W.\ \hphantom{aaaaaaaaaaaaaaaaaaaaaaaaaaaaaaaaaaaaaaaaaaaaaaaaa}(3)
            \end{array}
             \right.
$$
$$ y_{ij} = \left\{
              \begin{array}{ll}
           1,         &\,\,\,\,\,\,\ i,j\in W,\\
           0,         &\,\,\,\,\,\,\ \mbox{otherwise}.\hphantom{aaaaaaaaaaaaaaaaaaaaaaaaaaaaaaaaaaaaaaaaaaaaaa}(4)
            \end{array}
             \right.
$$

The ILP model of the FTMD problem can now be formulated as:
$$\mbox{Minimze}\ f(x_1,x_2,\ldots, x_n) = \sum \limits_{k=1}^n x_k \hphantom{aaaaaaaaaaaaaaaaaaaaaaaaaaaaaaaaaaaaaa}(5)$$
subject to:
$$\sum \limits_{i=1}^{n-1} \sum \limits_{j=i+1}^{n} A_{(u,v),(i,j)}\ y_{ij} \geq 1,\qquad 1\leq u< v\leq n, \hphantom{aaaaaaaaaaaaaaaaaaaaaaaaaaa}(6)$$
$$y_{ij}\leq \frac{1}{2}x_i+\frac{1}{2}x_j, \qquad 1\leq i< j\leq n, \hphantom{aaaaaaaaaaaaaaaaaaaaaaaaaaaaaaaaaaa}(7)$$
$$y_{ij}\geq x_i+x_j -1, \qquad 1\leq i< j\leq n,\ \hphantom{aaaaaaaaaaaaaaaaaaaaaaaaaaaaaaaaaa}(8)$$
$$y_{ij}\in \{0,1\},\qquad x_k\in \{0,1\}, \qquad 1\leq i< j\leq n,\qquad 1\leq k\leq n. \hphantom{aaaaaaaaaaaaa}(9)$$
Note that, the ILP model $(5)$-$(9)$ has $n+{n\choose 2}$ variables
and $3{n\choose 2}$ linear constraints. The following proposition
shows that each feasible solution of $(6)$-$(9)$ defines a
fault-tolerant resolving set of $G$ and vice-versa.

\begin{Proposition}\label{prop}
$W$ is a fault-tolerant resolving set of $G$ if and only if
constraints $(6)$-$(9)$ are satisfied.
\end{Proposition}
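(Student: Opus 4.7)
The plan is to reduce the ILP feasibility question to the combinatorial characterization given in Lemma~\ref{lem} (a resolving set is fault-tolerant iff every pair of vertices is resolved by at least two of its elements) and the reformulation in Remark~\ref{rem}. The bridge between the two worlds is the observation that, under the binarity condition $(9)$, constraints $(7)$ and $(8)$ are the standard McCormick linearization of the product $y_{ij} = x_i x_j$: $(7)$ forces $y_{ij} = 0$ unless $x_i = x_j = 1$, while $(8)$ forces $y_{ij} = 1$ whenever $x_i = x_j = 1$. Hence, setting $W := \{i : x_i = 1\}$, one has $y_{ij} = 1$ if and only if $\{i,j\}\subseteq W$.

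For the forward direction, suppose $W$ is a fault-tolerant resolving set. Define $x_i = 1$ iff $i \in W$ and $y_{ij} = x_i x_j$; then $(7)$--$(9)$ are immediate. To verify $(6)$, fix a pair $(u,v)$ with $u<v$. By Lemma~\ref{lem}, at least two distinct vertices of $W$ resolve $(u,v)$; ordering them as $i<j$ we obtain $A_{(u,v),(i,j)}=1$ by $(2)$ and $y_{ij}=1$, so the sum in $(6)$ is at least $1$.

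For the converse, assume $(6)$--$(9)$ hold and set $W=\{i:x_i=1\}$. By the linearization observation, each term $A_{(u,v),(i,j)}\,y_{ij}$ in $(6)$ contributes $1$ only when $i,j\in W$ and both $i$ and $j$ individually resolve $(u,v)$. Since $(6)$ is satisfied for every pair $(u,v)$, there exist distinct $i,j\in W$ each resolving $(u,v)$, which is precisely the condition in Remark~\ref{rem} that $(u,v)$ is fault-tolerantly resolved. Applying Lemma~\ref{lem}, $W$ is a fault-tolerant resolving set.

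The only step requiring any care is the McCormick equivalence: one must check that $(7)$ and $(8)$ together with binarity $(9)$ really pin $y_{ij}$ to $x_i x_j$ (fractional values of $y$ are excluded by $(9)$), and that restricting to ordered indices $i<j$ in the sum $(6)$ loses no witnesses, since two distinct resolvers can always be sorted. No deeper obstacle appears; the proposition is essentially a faithful encoding of Lemma~\ref{lem} and Remark~\ref{rem} as linear constraints.
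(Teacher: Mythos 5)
Your proposal is correct and follows essentially the same route as the paper: both directions hinge on the observation that $(7)$--$(9)$ force $y_{ij}=1$ exactly when $i,j\in W$, and that $(6)$ then encodes the characterization of Lemma~\ref{lem} that every pair of vertices must be resolved by at least two elements of $W$. Your explicit naming of the McCormick linearization and the citation of Remark~\ref{rem} are only presentational differences from the paper's argument.
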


\begin{proof}
$(\Rightarrow)$ Suppose that $W$ is a fault-tolerant resolving set
of $G$. Then for each $u, v \in V(G),\ u\neq v$, there exist $i, j
\in W$ ($i.e.,\ y_{ij} = 1$), $i\neq j$, such that $d(u, i) \neq
d(v, i)$ and $d(u, j)\neq d(v, j)$. Without loss of generality, we
may assume that $u< v$ and $i < j$. It follows that $A_{(u,v),(i,j)}
= 1$, and consequently constraints $(6)$ are satisfied. Constraints
$(7)$-$(9)$ are obviously satisfied since $i, j \in W$ implies that
$x_i = x_j = y_{ij} = 1$.

$(\Leftarrow)$ According to $(3)$, $W = \{i \in \{1, 2,\ldots, n\}\
|\ x_i = 1\}$. For all $i, j \in W$, from $(8)$ and $(9)$ it follows
that $y_{ij} =1$ because $y_{ij}\geq x_i +x_j -1=1$ and by $(9)$,
$y_{ij}$ is a binary variable. If $i$ or $j$ is not in $W$, then
constraints $(7)$ imply that $y_{ij}\leq \frac{1}{2} x_i+\frac{1}{2}
x_j\leq \frac{1}{2}$. Since $y_{ij}$ is a binary variable, it
follows that $y_{ij}=0$. Therefore, $y_{ij} = 1$ if and only if $i,
j \in W$. If constraints $(6)$ are satisfied, then for each $1\leq
u<v\leq n$, there exist $i, j \in \{1,\ldots, n\},\ i < j$, such
that $A_{(u,v),(i,j)} y_{ij}\geq 1$, which implies that $y_{ij} = 1\
(i.e., i, j \in W)$ and $A_{(u,v),(i,j)} = 1\ (i.e., d(u, i)\neq
d(v, i)\ \mbox{and}\ d(u, j)\neq d(v, j))$. It follows that the set
$W$ is a fault-tolerant resolving set of $G$.
\end{proof}

\begin{Remark}\label{remark}
The ILP formulation of the FTMD problem generally looks like the ILP
formulation of the minimum doubly resolving set (MDRS) problem
proposed by Kratica {et al.} \cite{kratica1}. But the difference is
in finding the entries $A_{(u,v),(i,j)}$ of the coefficient matrix
$A$. For instance, if $G = P_4:\ 1,2,3,4$ (path on four vertices).
Then in the case of MDRS problem, $A_{(1,2),(3,4)} = 0$ (by the
definition of the coefficient matrix given in \cite{kratica1}),
where as in the case of FTRS problem, $A_{(1,2),(3,4)} = 1$.
\end{Remark}
%%%------------------------------------------------------------------
\subsection{Local Metric Dimension Problem}
Let $u$ be a vertex of a graph $G$. The {\it open neighborhood} of
$u$ is $N(u)= \{v \in V(G)\ |\ v\ \mbox{is adjacent with}\ u\
\mbox{in}\ G\}$ and the {\it closed neighborhood} of $u$ is $N[u]=
N(u) \cup \{u\}$. Now, from the definition of local resolving set,
we have the following proposition:

\begin{Proposition}\label{prop1}
A subset $W$ of $V(G)$ of a non-trivial connected graph $G$ is local
resolving set if and only if for all $u\in V(G)$ and for each $v\in
N(u)$, $d(u,w)\neq d(v,w)$ for some $w\in W$.
\end{Proposition}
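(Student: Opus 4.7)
The statement is essentially a reformulation of the definition of a local resolving set, so the plan is simply to verify both implications by unpacking the definitions and noting that the edge relation in $G$ is symmetric.

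First I would handle the forward implication. Assume $W$ is a local resolving set, and fix any $u \in V(G)$ together with any $v \in N(u)$. Since $u$ and $v$ are adjacent, they form a pair of adjacent vertices in $G$, so by the definition of a local resolving set there must exist $w \in W$ with $d(u,w) \neq d(v,w)$. This is exactly the condition to be verified.

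For the reverse implication, assume that for every $u \in V(G)$ and every $v \in N(u)$ there is some $w \in W$ with $d(u,w) \neq d(v,w)$. Let $x,y$ be any two adjacent vertices of $G$; then $y \in N(x)$, and applying the hypothesis with $u=x$, $v=y$ gives a $w \in W$ distinguishing them. Hence every pair of adjacent vertices is resolved by an element of $W$, i.e., $W$ is a local resolving set.

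There is no substantive obstacle here: the only thing worth noting is that the quantification ``for all $u \in V(G)$ and for each $v \in N(u)$'' automatically covers each adjacent pair $\{x,y\}$ (in fact it covers it twice, once as $(x,y)$ and once as $(y,x)$), which matches the unordered pair formulation used in the definition of a local resolving set.
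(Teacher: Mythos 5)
Your proof is correct and follows essentially the same route as the paper's: both directions are obtained by directly unpacking the definition of a local resolving set and observing that quantifying over all $u\in V(G)$ and $v\in N(u)$ exhausts the adjacent pairs. Your added remark about the symmetric double-counting of each unordered pair is a harmless clarification not present in the paper.
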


\begin{proof}
$(\Rightarrow)$ Suppose that $W$ is a local resolving set of $G$.
Then by definition, for any two adjacent vertices $u$ and $v$ in
$G$, $i.e.$, for all $u\in V(G)$ and for each $v\in N(u)$, there
exists a vertex $w$ in $W$ such that $d(u,w)\neq d(v,w)$.

$(\Leftarrow)$ If for all $u\in V(G)$ and for each $v\in N(u)$,
$d(u,w)\neq d(v,w)$ for some $w\in W$, then every two adjacent
vertices are resolved by some vertex $w$ of $W$, which implies that
$W$ is a local resolving set of $G$.
\end{proof}

Given a simple connected undirected graph $G = (V(G), E(G))$, where
$V(G)=\{1,2,\ldots, n\}$ and $|E(G)|=m$. It is easy to determine the
length $d(u,v)$ of a shortest $u-v$ path for all $u,v\in V(G)$ using
any shortest path algorithm. The coefficient matrix $A$ is defined
as follows:

$$ A_{(u,v),i} = \left\{
              \begin{array}{ll}
           1,         &\,\,\,\,\,\,\ d(u,i)\neq d(v,i),\\
           0,         &\,\,\,\,\,\,\
           d(u,i)=d(v,i),\hphantom{aaaaaaaaaaaaaaaaaaaaaaaaaaaaaaaaaaaa}(10)
            \end{array}
             \right.
$$
where $1\leq u\leq n$, $1\leq i\leq n$ and $v\in N(u)$ with $v>u$.

Variable $x_i$ described by $(11)$ determines whether vertex $i$
belongs to a local resolving set $W$ or not.
$$ x_i = \left\{
              \begin{array}{ll}
           1,         &\,\,\,\,\,\,\ i\in W,\\
           0,         &\,\,\,\,\,\,\
           i\not \in W.\hphantom{aaaaaaaaaaaaaaaaaaaaaaaaaaaaaaaaaaaaaaaaaaaaaaaa}(11)
            \end{array}
             \right.
$$

The ILP model of the local metric dimension problem can now be
formulated as:
$$\mbox{Minimze}\ f(x_1,x_2,\ldots, x_n) = \sum \limits_{i=1}^n x_i \hphantom{aaaaaaaaaaaaaaaaaaaaaaaaaaaaaaaaaaaa}(12)$$
subject to:
$$\sum \limits_{i=1}^n A_{(u,v),i}\ x_i \geq 1,\qquad 1\leq u\leq n\,\ \mbox{and}\,\ v\in N(u)\,\, \mbox{with}\,\ v> u, \hphantom{aaaaaaaaaaaaaa}(13)$$
$$x_i\in \{0,1\},\qquad 1\leq i\leq n. \hphantom{aaaaaaaaaaaaaaaaaaaaaaaaaaaaaaaaaaaaaaaaaaa}(14)$$
Note that, the ILP model $(12)$-$(14)$ has $n$ variables and $m$
linear constraints. The following proposition shows that each
feasible solution of $(13)$ and $(14)$ defines a local resolving set
of $G$ and vice-versa.

\begin{Proposition}\label{prop2}
$W$ is a local resolving set of $G$ if and only if constraints
$(13)$ and $(14)$ are satisfied.
\end{Proposition}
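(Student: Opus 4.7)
The plan is to imitate the structure of the proof of Proposition \ref{prop}, using Proposition \ref{prop1} as the key intermediate characterization: since Proposition \ref{prop1} already rephrases ``$W$ is local resolving'' as ``for every edge $uv$ some $w\in W$ separates $u$ from $v$'', the task reduces to a routine dictionary translation between that combinatorial statement and the ILP constraints $(13)$ and $(14)$ via the definitions $(10)$ and $(11)$.

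For the forward direction I would assume $W$ is a local resolving set and fix any edge with endpoints $u<v$. Then $v\in N(u)$, so by Proposition \ref{prop1} there exists $w\in W$ with $d(u,w)\neq d(v,w)$. By $(11)$ this gives $x_w=1$, and by $(10)$ it gives $A_{(u,v),w}=1$, hence $A_{(u,v),w}\,x_w=1$ and the corresponding sum in $(13)$ is at least $1$. Constraint $(14)$ holds by the very definition of $x_i$ as a $\{0,1\}$-indicator of membership in $W$.

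For the converse I would set $W=\{i\in\{1,\ldots,n\}\mid x_i=1\}$ as prescribed by $(11)$. Given that $(13)$ and $(14)$ hold, fix any $u\in V(G)$ and any $v\in N(u)$; without loss of generality $u<v$ (otherwise swap, using that the condition $d(u,w)\neq d(v,w)$ is symmetric in $u,v$). Then $(13)$ supplies an index $i$ with $A_{(u,v),i}\,x_i\geq 1$, and since both factors are in $\{0,1\}$ by $(14)$ and $(10)$, this forces $x_i=1$ and $A_{(u,v),i}=1$, i.e., $i\in W$ and $d(u,i)\neq d(v,i)$. Proposition \ref{prop1} then yields that $W$ is a local resolving set.

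There is essentially no obstacle here; the only point worth a brief comment is the asymmetry between the ordered index range ``$v\in N(u)$ with $v>u$'' in $(13)$ and the unordered requirement ``every two adjacent vertices are resolved'' in the definition of a local resolving set, which is immediately handled by the symmetry of the inequality $d(u,w)\neq d(v,w)$.
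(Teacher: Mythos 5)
Your proposal is correct and follows essentially the same route as the paper: both directions reduce to Proposition \ref{prop1} and then translate between the combinatorial condition and the constraints via the definitions $(10)$ and $(11)$. Your explicit remark on the asymmetry of the index range ``$v\in N(u)$ with $v>u$'' versus the unordered adjacency condition is a small point the paper's proof passes over silently, but it does not change the argument.
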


\begin{proof}
$(\Rightarrow)$ Suppose that $W$ is a local resolving set of $G$.
Then by Proposition \ref{prop1}, for all $u\in V(G)$ and for each
$v\in N(u)$, there exists a vertex $i$ in $W$ $(i.e., x_i = 1)$ such
that $d(u,i)\neq d(v,i)$. It follows that $A_{(u,v),i} = 1$, and
consequently constraints $(13)$ are satisfied. Constraints $(14)$
are obviously satisfies by the definition of variable $x_i$.

$(\Leftarrow)$ According to $(11)$, $W = \{i\in \{1,2,\ldots, n\}\
|\ x_i = 1\}$. If constraints $(13)$ are satisfied, then for all
$u\in V(G)$ and for each $v\in N(u)$ with $v>u$, there exists $i\in
\{1,2,\ldots, n\}$ such that $A_{(u,v),i}\ x_i \geq 1$. This implies
that $x_i = 1\ (i.e., i\in W)$ and $A_{(u,v),i} = 1\ (i.e.,
d(u,i)\neq d(v,i))$. It follows, by Proposition \ref{prop1}, that
$W$ is a local resolving set of $G$.
\end{proof}

%%%--------------------------------------------------------------------------
\section{Convex Polytopes $S_{n}$}
The convex polytopes $S_n$, $n\geq 3$, \cite{imran} (see Figure
\ref{fig1}) consists of $2n$ 3-sided faces, $2n$ 4-sided faces and a
pair of $n$-sided faces obtained by the combination of a convex
polytope $R_n$ and a prism $D_n$ having vertex and edge sets as:
$$V(S_n)=\{a_i,b_i,c_i,d_i\ |\ 1\leq i\leq n\},\hphantom{aaaaaaaaaaaaaaaaaaaaaaaaaaaaaaaaaaaaaaaaaaaaaaa}$$
$$E(S_n)=\{a_ia_{i+1}, b_ib_{i+1}, c_ic_{i+1}, d_id_{i+1},a_{i+1}b_i, a_ib_i, b_ic_i, c_id_i\ |\ 1\leq i\leq n\}.\hphantom{aaaaaaaaaaaaaaaaaaa}$$

\begin{figure}[h]
        \centerline
        {\includegraphics[width=13cm]{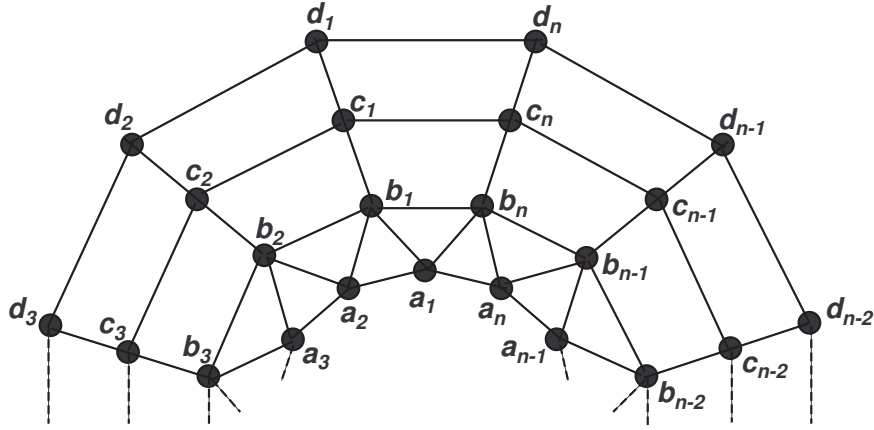}}
        \caption{The graph of convex polytope
        $S_n$}\label{fig1}
\end{figure}
The metric dimension of $S_n$ was studied in \cite{imran}. In this
section, we show that $lmd(S_n) = 2$ when $n$ is odd and
$lmd(S_n)=3$ when $n$ is even. Moreover, we show that $sdim(S_n) =
n$ when $n$ is odd and $sdim(S_n) = \frac{3n}{2}$ when $n$ is even.
The main results of this section are the following:

\begin{Theorem}\label{th1}
For any convex polytope $S_n$, $n\geq 3$, we have $$ lmd(S_n) =
\left\{
              \begin{array}{ll}
           2,         &\,\,\,\,\,\,\ \mbox{if}\ n\ \mbox{is odd},\\
           3,         &\,\,\,\,\,\,\ \mbox{if}\ n\ \mbox{is even}.
            \end{array}
             \right.
$$
\end{Theorem}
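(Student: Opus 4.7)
The plan is to match a lower and an upper bound for each parity of $n$. For the lower bound $lmd(S_n)\geq 2$, I would invoke the characterization that a connected graph $G$ has $lmd(G)=1$ if and only if $G$ is bipartite: a single vertex $w$ assigns distinct distances to the two endpoints of every edge exactly when $G$ has no odd cycle. Since $a_i,b_i,a_{i+1}$ form a triangle in $S_n$ via the edges $a_ib_i$, $a_{i+1}b_i$ and $a_ia_{i+1}$, the graph $S_n$ is not bipartite, and hence $lmd(S_n)\geq 2$ for every $n\geq 3$.

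For the upper bound when $n$ is odd, I would exhibit a concrete two-vertex candidate, say $W=\{a_1,a_2\}$, and verify Proposition \ref{prop1}. The adjacent pairs of $S_n$ fall into a bounded number of symmetry classes, namely the four cycle-edge classes $a_ia_{i+1}$, $b_ib_{i+1}$, $c_ic_{i+1}$, $d_id_{i+1}$ and the four spoke-edge classes $a_ib_i$, $a_{i+1}b_i$, $b_ic_i$, $c_id_i$. For each class I would compute $d(a_1,\cdot)$ and $d(a_2,\cdot)$ on the two endpoints using the obvious shortest paths through the successive layers, and check that at least one of the two vertices in $W$ separates them. The odd-parity hypothesis is what rules out the only tie: on each cycle layer an even $n$ would produce a pair of adjacent vertices equidistant from both $a_1$ and $a_2$, and odd $n$ destroys exactly this symmetry.

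The bulk of the work, and the main obstacle, is the even case. To show $lmd(S_n)\geq 3$ when $n$ is even, I would assume for contradiction that some $W=\{u,v\}$ is a local resolving set. Using the rotational symmetry $i\mapsto i+1$ and the reflection symmetry of $S_n$, I would normalize one of the two vertices to lie in a fixed layer (say $u=a_1$) and then case-split on which layer $v$ lies in and on the index of $v$. In each case I would exhibit one particular adjacent pair that is not resolved: when $n$ is even, each cycle of length $n$ admits an antipodal vertex, and the two neighbors of that antipode are equidistant from any vertex in the chosen layer; chasing the distances through the spokes one checks that for every admissible pair $(u,v)$ there is an edge — typically on the outer $c$- or $d$-cycle — whose endpoints are left unresolved. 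Once the lower bound is in hand, I would finish by producing an explicit three-vertex set (something like $W=\{a_1,a_2,c_{1+n/2}\}$) and verifying Proposition \ref{prop1} class-by-class as in the odd case; the added vertex on the antipodal layer breaks precisely the even-$n$ ties identified in the lower-bound argument.
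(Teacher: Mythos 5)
There is a genuine gap in the odd case. Your candidate $W=\{a_1,a_2\}$ is \emph{not} a local resolving set of $S_n$. The trouble is not on the cycle layers but on the triangles $a_j,b_j,a_{j+1}$: for $2\leq j\leq k$ (with $n=2k+1$) the adjacent pair $(b_j,a_{j+1})$ satisfies $d(b_j,a_1)=j=d(a_{j+1},a_1)$ and $d(b_j,a_2)=j-1=d(a_{j+1},a_2)$, so neither vertex of $W$ resolves it. Concretely, in $S_5$ the edge $b_2a_3$ has both endpoints at distance $2$ from $a_1$ and at distance $1$ from $a_2$. This also shows that your heuristic --- that the only possible ties come from antipodal symmetry of an even cycle --- is wrong: the spoke/triangle pairs $(b_j,a_{j+1})$ and $(b_j,a_j)$ are tied for \emph{any} parity whenever both landmarks sit close together on $C_a$, because $b_j$ and one of its two $a$-neighbours always lie at equal distance from a landmark on the ``same side''. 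This is exactly why the paper takes the two landmarks nearly antipodal, $W=\{a_1,a_{k+1}\}$: then for each $j$ the neighbour of $b_j$ that ties with it relative to $a_1$ is separated from it by $a_{k+1}$ and vice versa. The verification is an explicit distance table plus a four-case check over the neighbourhoods $N(a_j),N(b_j),N(c_j),N(d_j)$; any corrected version of your argument would have to do essentially this computation, and the parity of $n$ enters only in the residual ties on the $c$- and $d$-cycles, not as the sole source of ties.

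The rest of your outline is sound and close to the paper. The bound $lmd(S_n)\geq 2$ from non-bipartiteness (triangles $a_ib_ia_{i+1}$) is exactly the paper's argument. For even $n$, your plan for the lower bound --- normalize one landmark by symmetry, split on which layer the second landmark lies in, and exhibit an unresolved edge in each case --- is the content of the paper's Lemmas \ref{lem1} and \ref{lem2} (same cycle versus different cycles), though you should be aware that the unresolved pairs the paper exhibits again live in closed neighbourhoods $N[a_1]$, $N[a_2]$, $N[b_1]$, $N[c_1]$, i.e. among the triangle/spoke pairs, not on the outer cycles as you predict. For the even upper bound the paper simply invokes $lmd(S_n)\leq\beta(S_n)=3$ from the known metric dimension of $S_n$, which is cheaper than verifying an explicit three-vertex set; your route is legitimate but leaves the proposed set $\{a_1,a_2,c_{1+n/2}\}$ unverified, and given the failure above you should not trust it without a full check.
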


\begin{Theorem}\label{th2}
For any convex polytope $S_n$, $n\geq 3$, we have
$$ sdim(S_n) =
\left\{
              \begin{array}{ll}
           n,         &\,\,\,\,\,\,\ \mbox{if}\ n\ \mbox{is odd},\\
           \frac{3n}{2},         &\,\,\,\,\,\,\ \mbox{if}\ n\ \mbox{is
           even}.
            \end{array}
             \right.
$$
\end{Theorem}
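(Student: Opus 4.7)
The plan is to apply the classical reduction of Oellermann and Peters-Fransen \cite{oellermann}: the strong metric dimension of a connected graph equals the vertex cover number of its \emph{strong resolving graph} $G_{SR}$, whose vertex set is $V(G)$ and whose edges are the \emph{mutually maximally distant} (MMD) pairs. Recall that $u$ is maximally distant from $v$ when $d(u,v)\geq d(w,v)$ for every neighbor $w$ of $u$, and $\{u,v\}$ is MMD when each of $u,v$ is maximally distant from the other. Thus Theorem \ref{th2} reduces to computing the vertex cover number of $G_{SR}(S_n)$ and showing it equals $n$ for odd $n$ and $\frac{3n}{2}$ for even $n$.

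First I would derive closed-form distances $d(x_i, y_j)$ for all layer pairs $x,y \in \{a,b,c,d\}$. The crucial structural feature of $S_n$ is that the outer $d$-cycle is attached to the rest of $S_n$ only through the spokes $c_id_i$, and the inner $a$-cycle is attached only through the rungs $a_ib_i$ and $a_{i+1}b_i$; hence every shortest path from a $d$-vertex to a non-$d$ vertex must leave the outer cycle at some $c_k$, and symmetrically on the inner side. This reduces every distance to the cycle distance on $C_n$ plus a small layer-dependent offset. With these formulas in hand, I would check the maximality condition layer by layer and argue that no $b$- or $c$-vertex can be maximally distant from any target, since each such vertex has a neighbor in an adjacent layer that is strictly farther from any prospective target. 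This localizes all MMD pairs to the two $n$-gonal boundary faces, i.e.\ to pairs of the form $\{a_i, a_j\}$, $\{a_i, d_j\}$, or $\{d_i, d_j\}$.

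Next I would describe $G_{SR}(S_n)$ explicitly. Parity intervenes here: on the $d$-cycle, $d_i$ is MMD with $d_j$ only when $d_j$ is at a local distance maximum from $d_i$, which for odd $n$ picks out two ``near-antipodes'' whereas for even $n$ it singles out a unique antipode together with additional $d$-to-$a$ MMD pairs generated by the tied antipodal distance. I expect $G_{SR}(S_n)$ to be a disjoint union of ``cyclic'' pieces on the $a$- and $d$-vertices for odd $n$, whose minimum vertex cover is precisely $n$, and a denser graph for even $n$ in which the antipodal pairs contribute an additional perfect matching on the extremal vertices, pushing the vertex cover up to $\frac{3n}{2}$. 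For each parity I would then exhibit a concrete strong resolving set of the claimed size (giving the upper bound), and a vertex-disjoint MMD-edge packing (or an MMD-clique) of matching size to force the lower bound, in the style of the companion analyses in \cite{kratica,may}.

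The main obstacle will be the MMD classification in the second step. Carrying out the maximality tests requires several overlapping case distinctions---whether the index difference modulo $n$ is ``close'', ``halfway'', or ``opposite'', and whether $n$ is odd or even---and the wraparound at the (near-)antipode is precisely the place where the two cases of the theorem diverge. Once the MMD pairs are catalogued correctly, both the vertex cover computation and the construction of the explicit strong resolving sets follow as a routine combinatorial finish.
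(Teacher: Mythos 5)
Your overall strategy---reducing $sdim(S_n)$ to the vertex cover number of the strong resolving graph $G_{SR}(S_n)$ on the mutually maximally distant (MMD) pairs, as in \cite{oellermann}---is legitimate, and it is essentially a systematized version of what the paper does: the pairs satisfying conditions $(i)$ and $(ii)$ of Lemma \ref{lem4} are exactly the MMD pairs, Properties \ref{pro1} and \ref{pro2} force one endpoint of each such pair into $S$, the paper's lower bounds (Lemmas \ref{lem5} and \ref{lem7}) are matching lower bounds on that vertex cover, and the upper bounds (Lemmas \ref{lem6} and \ref{lem8}) come from directly verifying an explicit set rather than from computing the cover number of the full $G_{SR}$.

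There is, however, a genuine gap in your second step: the claim that no $b$- or $c$-vertex can be maximally distant from any target, so that all MMD pairs localize to the two $n$-gonal faces. This holds for odd $n$ but fails for even $n=2k$. Writing $\delta(i,j)$ for the cycle distance between indices, one has $d(b_i,d_j)=2+\delta(i,j)$, so $d(b_i,d_{i+k})=k+2=diam(S_n)$; a diametral pair is automatically MMD, so $G_{SR}(S_n)$ contains the perfect matching $\{b_i,d_{i+k}\}$, $i=1,\dots,n$ --- precisely the pairs the paper exploits in Lemma \ref{lem7}. Your heuristic that every $b_i$ has a neighbor strictly farther from any prospective target breaks here: all of $a_i$, $a_{i+1}$, $b_{i\pm1}$, $c_i$ are at distance at most $k+2$ from $d_{i+k}$, with none strictly exceeding $d(b_i,d_{i+k})$. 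The omission is fatal rather than cosmetic. Since $d(c_i,d_j)=1+\delta(i,j)>d(d_i,d_j)$, no $d$--$d$ pair is MMD, so under your localization every edge of the purported $G_{SR}$ would be incident to an $a$-vertex; then $\{a_1,\dots,a_n\}$ would be a vertex cover of size $n$ and the Oellermann--Peters-Fransen theorem would yield $sdim(S_n)\leq n$, contradicting the value $\frac{3n}{2}$ you are trying to establish. You would need to redo the MMD classification with the $b$-layer included (and recheck the $c$-layer) before the vertex cover computation, and hence the even case of the theorem, can go through.
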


For each fixed $i\in \{a,b,c,d\}$, let $C_i$ denotes the cycle
induced by the vertices $i_1,i_2,\ldots, i_n$ in $S_n$. Next we
prove the several lemmas which support the proofs of Theorem
\ref{th1} and Theorem \ref{th2}.

\begin{Lemma}\label{lem1}
For $n=2k$, $k\geq 2$, if $lmd(S_n)=2$, then any local metric basis
of $S_n$ does not contain both the vertices of the same cycle $C_i$,
$i\in \{a,b,c,d\}$.
\end{Lemma}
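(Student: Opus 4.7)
My plan is to argue by contradiction: I would suppose that some $W=\{i_r,i_s\}\subseteq V(C_i)$ is a local resolving set of $S_n$ and then produce an adjacent pair that $W$ fails to resolve. Using the rotational automorphism of $S_n$ that shifts every cycle index simultaneously, I may normalize $r=1$; using the reflective automorphism that fixes $i_1$ (and which automatically also fixes $i_{k+1}$), I may further restrict to $s\in\{2,\dots,k+1\}$. Writing $s=t+1$, the work then splits into the generic range $1\le t\le k-1$ and the antipodal case $t=k$.

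As a preparatory step I would record the distance formulas on $S_n$, using the observation that a shortest path traverses each of the three layer interfaces at most once. This gives $d(x_i,x_j)=d_{C_n}(i,j)$ within any cycle $C_x$, $d(b_i,c_j)=d(c_i,d_j)=1+d_{C_n}(i,j)$, $d(b_i,d_j)=2+d_{C_n}(i,j)$, and, because $a_i$ has the two neighbours $b_{i-1},b_i$, $d(a_i,b_j)=1+\min\{d_{C_n}(i-1,j),d_{C_n}(i,j)\}$, with analogous additive formulas for $d(a_i,c_j)$ and $d(a_i,d_j)$.

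In the generic range $1\le t\le k-1$, I would exhibit a cross-layer adjacent pair that is equidistant from both basis vertices. For $i\in\{b,c,d\}$, the edge $a_jb_j$ with $t+2\le j\le k+1$ plays this role: the formulas above force $d(a_j,i_1)=d(b_j,i_1)$ and $d(a_j,i_{t+1})=d(b_j,i_{t+1})$, the common value depending on $i$ but each equality following from the same $\min$ on the antiprism side. For $i=a$, the analogous edge is $a_jb_{j-1}$ in the same range, giving $d(a_j,a_1)=j-1=d(b_{j-1},a_1)$ and $d(a_j,a_{t+1})=j-t-1=d(b_{j-1},a_{t+1})$.

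The antipodal case $t=k$ needs a separate argument because the generic range of $j$ becomes empty. Here I would appeal to the reflective automorphism $\sigma$ of $S_n$ that fixes both basis vertices: for $i=a$, $\sigma(a_j)=a_{2-j}$ and $\sigma(x_j)=x_{1-j}$ for $x\in\{b,c,d\}$; for $i\in\{b,c,d\}$, $\sigma(a_j)=a_{3-j}$ and $\sigma(x_j)=x_{2-j}$ for $x\in\{b,c,d\}$. Any edge of $S_n$ whose endpoints are interchanged by $\sigma$ is automatically unresolved by $W$, and I would exhibit such an edge in each case — namely $b_1b_n$ when $i=a$ and $a_1a_2$ when $i\in\{b,c,d\}$. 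The hardest part will be the case analysis itself: four values of $i$, two regimes of $t$, and careful bookkeeping of the non-wrap-around range of the cycle distance $d_{C_n}(r,s)=\min\{|r-s|,n-|r-s|\}$ within the generic argument.
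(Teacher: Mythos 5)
Your proposal is correct, and at the top level it follows the same strategy as the paper: assume a two-element local basis inside a single cycle $C_i$ and derive a contradiction by exhibiting an adjacent pair equidistant from both basis vertices. The execution, however, differs in ways worth noting. The paper normalizes only by rotation (taking $u=i_1$, $v=i_j$ with $2\le j\le n$) and then simply asserts, without any distance computation, that an unresolved adjacent pair can be found inside $N[a_1]$ when $2\le j\le k+1$ and inside $N[a_2]$ when $k+2\le j\le n$; read literally the assertion is even too strong, since for $i=a$ the pairs involving $a_1$ itself are of course resolved. You instead add the reflection to cut the parameter range in half, write down the relevant distance formulas, and name concrete witness edges $a_jb_j$ (resp.\ $a_jb_{j-1}$) with $t+2\le j\le k+1$ in the generic range; these witnesses are generally \emph{not} the ones the paper points to, and they come with verifiable equalities. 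Your treatment of the antipodal case $W=\{i_1,i_{k+1}\}$ via a reflective automorphism fixing both basis vertices and swapping the endpoints of $b_1b_n$ (resp.\ $a_1a_2$) is a genuinely different and cleaner device with no counterpart in the paper, and it is exactly the case where your generic witness range becomes empty, so isolating it is necessary. I checked your distance formulas and the claimed equalities (e.g.\ $d(b_1,a_{k+1})=k=d(b_n,a_{k+1})$ while $d(b_1,a_1)=1=d(b_n,a_1)$), and the two automorphisms do preserve the edge set of $S_n$; the case split $s\in\{2,\dots,k\}$ versus $s=k+1$ is exhaustive after your normalization. In short, your argument buys a complete, checkable proof at the cost of more bookkeeping, whereas the paper's version is a terse sketch of the same idea.
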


\begin{proof}
Suppose contrarily that $W = \{u,v\}$ be a local matric basis of
$S_n$ with $u,v\in V(C_i)$, $i\in \{a,b,c,d\}$. Then for fixed $i\in
\{a,b,c,d\}$, there is no loss of generality in assuming that $u =
i_1$ and $v=i_j,\ 2\leq j\leq n$. This gives that for any two
adjacent vertices $x$ and $y$ of $S_n$ such that
$$ x,y \in
\left\{
              \begin{array}{ll}
           N[a_1],         &\,\,\,\,\,\,\ \mbox{when}\,\ 2\leq j\leq k+1,\\
           N[a_2],         &\,\,\,\,\,\,\ \mbox{when}\,\ k+2\leq j\leq
           n, \hphantom{aaaaaaaaaaaaaaaaaaaaaaaaaaaaaaaaaaaaaa}
            \end{array}
             \right.
$$
we have $d(x,w) = d(y,w)$ for all $w\in W$, a contradiction to the
fact that $lmd(S_n) = 2$.
\end{proof}

\begin{Lemma}\label{lem2}
For $n=2k$, $k\geq 2$, if $lmd(S_n)=2$, then any local metric basis
$W$ of $S_n$ does not has the property that $W$ contains one vertex
from $C_i$ and the other one from $C_j$ ($j\neq i$), where $i,j\in
\{a,b,c,d\}$.
\end{Lemma}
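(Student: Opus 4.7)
The plan is to argue by contradiction, in parallel with the proof of Lemma~\ref{lem1}. Suppose $W=\{u,v\}$ is a local metric basis of $S_n$ with $u\in V(C_i)$ and $v\in V(C_j)$ for distinct $i,j\in\{a,b,c,d\}$. Exploiting the rotational symmetry $r\mapsto r+1$ on indices together with the reflection symmetry of $S_n$, we may assume without loss of generality that $u=i_1$ and $v=j_\ell$ for some $1\leq \ell\leq k+1$. The goal is then, in each of the resulting sub-cases, to exhibit an adjacent pair $x,y$ of $S_n$ with $d(x,u)=d(y,u)$ and $d(x,v)=d(y,v)$, contradicting the assumption that $W$ is a local resolving set.

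To locate such unresolved edges systematically, I would split the argument along the six unordered pairs $\{i,j\}\subset\{a,b,c,d\}$, further subdivided by the position of $\ell$. The natural candidates for unresolved adjacent pairs are edges lying antipodal with respect to both $u$ and $v$, since the even length $n=2k$ of each cycle $C_i$ makes pairs of consecutive vertices near index $k+1$ equidistant from index $1$. Thus, when $i\in\{a,b\}$ and $j\in\{c,d\}$, the relevant unresolved pair can typically be chosen on one of $C_a$ or $C_b$ near the antipode of $u$; analogous choices on $C_c$ or $C_d$ work when the roles are swapped, and when $i,j$ both lie among $\{a,b\}$ or both among $\{c,d\}$ the unresolved pair is chosen on the two cycles structurally opposite to the pair $\{i,j\}$. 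To carry out the verification, I would use the fact that distances in $S_n$ from a vertex on $C_i$ to a vertex on $C_j$ decompose into a vertical component (the number of transitions between adjacent cycles) plus a horizontal cyclic distance along the $n$-cycle, with a small adjustment accounting for the double adjacency $a_r, a_{r+1} \sim b_r$ at the $C_a$-$C_b$ junction. Once these distance formulas are recorded, the verification in each sub-case reduces to routine parity checks exploiting that $n$ is even.

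The main obstacle is the combinatorial bookkeeping: there are six cycle-pair types and roughly $k+1$ positions of $\ell$ in each, compounded by the structural asymmetry between the outer cycles $C_a, C_d$ and the middle cycles $C_b, C_c$, and between the doubled $C_a$-$C_b$ junction and the single-edge $C_b$-$C_c$, $C_c$-$C_d$ junctions. Grouping the cycle pairs by structural type should collapse the six cases into two or three representative families, after which the $\ell$-sub-cases split naturally according to whether $\ell\leq k$ or $\ell=k+1$. The most delicate point is the boundary case $\ell=k+1$, where $u$ and $v$ are antipodal in the cyclic index and one must pick the unresolved edge with care to avoid the unexpectedly short paths crossing the antipode; this is where the difference between the two cycle-junction types contributes most, and where I expect the proof to require the most explicit distance computation.
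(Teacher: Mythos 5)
Your overall strategy --- assume $W=\{u,v\}$ with $u\in V(C_i)$, $v\in V(C_j)$, normalize by symmetry, and exhibit an adjacent pair equidistant from both $u$ and $v$ --- is exactly the strategy of the paper, which splits into three cases according to which of the cycles contains the ``upper'' basis vertex. The problem is that your writeup stops at the strategy: the entire content of the lemma is the explicit exhibition of an unresolved edge in each configuration, and you defer all of that to ``routine parity checks,'' saying the pair ``can typically be chosen'' in a certain location. Moreover, the one concrete heuristic you do commit to fails in at least one family of cases. For $u\in C_a$ and $v\in C_b$ you propose to find the unresolved edge on the ``structurally opposite'' cycles $C_c\cup C_d$; but already for $S_4$ with $W=\{a_1,b_1\}$ one checks that every edge inside $C_c\cup C_d$ is resolved by $W$ (e.g.\ $d(c_2,a_1)=3=d(c_3,a_1)$ but $d(c_2,b_1)=2\neq 3=d(c_3,b_1)$, and $d(d_2,b_1)=3\neq 4=d(d_3,b_1)$; every edge $c_rd_r$ is resolved by $a_1$). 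The unresolved edge is instead the junction edge $a_3b_3$, with $d(a_3,a_1)=d(b_3,a_1)=2$ and $d(a_3,b_1)=d(b_3,b_1)=2$, and this is precisely where the paper looks: in its Case 1 ($u=a_1$, $v=i_j$ with $i\in\{b,c,d\}$) the witnesses are the $C_a$--$C_b$ junction pairs $(a_{j+k},b_{j+k})$ for $1\leq j\leq k$ and $(b_{j-k},a_{j-k+1})$ for $k+1\leq j\leq n$, while in its Cases 2 and 3 ($u\in C_b$ or $u\in C_c$) the unresolved pair is located near $u$ itself rather than near any antipode.

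To repair the argument you would need to name, for each of the six cycle pairs and each admissible position of $v$, a specific adjacent pair $x,y$ and verify both equalities $d(x,u)=d(y,u)$ and $d(x,v)=d(y,v)$. The junction edges $a_rb_r$ and $a_{r+1}b_r$ are the natural candidates whenever $u$ or $v$ lies on $C_a$ or $C_b$, since $b_r$ is adjacent to both $a_r$ and $a_{r+1}$ and is therefore forced to be equidistant from a vertex of $C_a$ with one of them; your instinct that the antipodal position $\ell=k+1$ is the delicate sub-case is reasonable, but as written the proposal verifies neither that case nor any other.
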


\begin{proof}
Suppose contrarily that $W = \{u,v\}$ be a local matric basis of
$S_n$ with $u\in V(C_i)$ and $v\in V(C_j)$ $(j\neq i)$, $i,j\in
\{a,b,c,d\}$. Then we have the following three cases:\\

\noindent Case 1: When $u\in C_a$ and $v\in C_i,\ i\in \{b,c,d\}$.
Without loss of generality, we \hphantom{aaaaaaaa}assume that $u =
a_1$ and $v = i_j$, $1\leq j\leq n$. Then for $1\leq j\leq k$, there
\hphantom{aaaaaaa} exists a vertex $b_{j+k}$ in $N[a_{j+k}]$ such
that $d(a_{j+k}, w) = d(b_{j+k},w)$ for all \hphantom{aaaaaaa} $w\in
W$, a contradiction. Also for $k+1\leq j\leq n$, there exists a
vertex \hphantom{aaaaaaa} $a_{j-k+1}$ in $N[b_{j-k}]$ such that
$d(a_{j-k+1}, w) = d(b_{j-k},w)$ for all $w\in W$, a
\hphantom{aaaaaaa} contradiction.

\noindent Case 2: When $u\in C_b$ and $v\in C_i,\ i\in \{c,d\}$.
Without loss of generality, we \hphantom{aaaaaaaa}assume that $u =
b_1$ and $v = i_j$, $1\leq j\leq n$. Then there exist two vertices
\hphantom{aaaaaaaa}$x$ and $y$ in $N[u]$ such that $d(x, w) =
d(y,w)$ for all $w\in W$, a contradiction.

\noindent Case 3: When $u\in C_c$ and $v\in C_d$. Without loss of
generality, we assume that \hphantom{aaaaaaa} $u = c_1$ and $v =
d_j$, $1\leq j\leq n$. Then there exist two vertices $x$ and $y$ in
\hphantom{aaaaaaa} $N[u]$ such that $d(x, w) = d(y,w)$ for all $w\in
W$, a contradiction.
\end{proof}

\begin{Lemma}\label{lem3}
For $n = 2k+1$, $k\geq 2$, $W = \{a_1,a_{k+1}\}$ is a local
resolving set of $S_n$.
\end{Lemma}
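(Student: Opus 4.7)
By Proposition \ref{prop1}, it suffices to verify that for every edge $uv$ of $S_n$, at least one of $a_1$ or $a_{k+1}$ distinguishes $u$ from $v$ by distance. Setting $r_1(x)=d(a_1,x)$ and $r_2(x)=d(a_{k+1},x)$, the task is to show that the pair $(r_1,r_2)$ is not constant along any edge of $S_n$.

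The first step is to tabulate $r_1$ and $r_2$ as explicit piecewise-linear functions of the index on each of the four cycles $C_a, C_b, C_c, C_d$. Since $n=2k+1$ is odd, no vertex of $C_a$ is simultaneously equidistant from $a_1$ along both arcs, so on $C_a$ one gets $r_1(a_i)=i-1$ for $1\le i\le k+1$ and $r_1(a_i)=n-i+1$ for $k+2\le i\le n$. For the inner layers I would argue that each $b_j$, $c_j$, $d_j$ is reached optimally by a radial descent, so that $r_1(b_j)=\min(r_1(a_j),r_1(a_{j+1}))+1$, $r_1(c_j)=r_1(b_j)+1$, and $r_1(d_j)=r_1(b_j)+2$; this requires checking that no shortcut along $C_b$, $C_c$, or $C_d$ improves these values, which follows from a short triangle-inequality argument using the fact that cycle detours cost at least two edges. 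The values of $r_2$ are then obtained from those of $r_1$ by applying the rotational automorphism of $S_n$ that shifts every index by $k$.

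Next, I would use the reflection $\sigma\colon a_i\mapsto a_{k+2-i}$, $b_i\mapsto b_{k+1-i}$, $c_i\mapsto c_{k+1-i}$, $d_i\mapsto d_{k+1-i}$, which is a graph automorphism of $S_n$ exchanging $a_1$ and $a_{k+1}$ and hence swapping $r_1$ with $r_2$. This symmetry halves the casework: it suffices to check edges with at least one endpoint whose index lies in $\{1,\dots,k+1\}$. The eight edge types $a_ia_{i+1}$, $b_ib_{i+1}$, $c_ic_{i+1}$, $d_id_{i+1}$, $a_ib_i$, $a_{i+1}b_i$, $b_ic_i$, and $c_id_i$ are then handled one at a time by reading off $(r_1,r_2)$ at the two endpoints and confirming the values differ in at least one coordinate.

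The main obstacle will be the edges crossing the antipodal region of $a_1$, where $r_1$ attains its maximum and its piecewise formula changes monotonicity. For instance, the edge $a_{k+1}a_{k+2}$ satisfies $r_1(a_{k+1})=r_1(a_{k+2})=k$, so $r_1$ fails there; however, $r_2$ resolves it trivially since $a_{k+1}\in W$. The analogous suspect edges on $C_b$, $C_c$, $C_d$, and the rung edges $a_{i+1}b_i$ and $b_ic_i$ through that region are resolved in exactly the same way, with the oddness of $n$ ensuring that $r_1$ and $r_2$ never collapse simultaneously on any single edge. Once these critical edges are checked, the remaining cases reduce to routine comparisons of consecutive piecewise-linear values, completing the proof.
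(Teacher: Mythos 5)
Your overall strategy coincides with the paper's: tabulate $d(\cdot,a_1)$ and $d(\cdot,a_{k+1})$ explicitly on $C_a,C_b,C_c,C_d$ and then check every adjacent pair via Proposition \ref{prop1}. Your radial formulas $r_1(b_j)=\min(r_1(a_j),r_1(a_{j+1}))+1$, $r_1(c_j)=r_1(b_j)+1$, $r_1(d_j)=r_1(b_j)+2$ agree with the paper's distance tables, and obtaining $r_2$ from $r_1$ by the index shift by $k$ is legitimate, since that rotation is an automorphism carrying $a_1$ to $a_{k+1}$. But one of your reductions is wrong. The reflection $\sigma\colon a_i\mapsto a_{k+2-i}$, $b_i\mapsto b_{k+1-i}$, $c_i\mapsto c_{k+1-i}$, $d_i\mapsto d_{k+1-i}$ is indeed an automorphism swapping $a_1$ and $a_{k+1}$, but it does \emph{not} reduce the verification to edges with an endpoint indexed in $\{1,\dots,k+1\}$: its axis is the perpendicular bisector of the pair $\{a_1,a_{k+1}\}$, so it maps the arc with indices in $\{1,\dots,k+1\}$ to itself and the complementary arc to itself. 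For example $\sigma$ sends the edge $a_{k+2}a_{k+3}$ to $a_{2k+1}a_{2k}$, so the $\sigma$-orbits of edges lying entirely in the far arc never meet your proposed representative set and those edges would go unchecked. The symmetry does halve the casework, but the representatives must be chosen inside each arc separately.

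Second, and more importantly, the verification itself, which is the entire content of the lemma, is only asserted, and your description of where it is delicate understates the casework. The single-coordinate failures are not confined to the antipodal region: $r_2$ is constant on every spoke $a_jb_j$ with $1\le j\le k$ (both values equal $k-j+1$) and $r_1$ is constant on every spoke $a_{j+1}b_j$ with $1\le j\le k$ (both values equal $j$), so about half of the $a$--$b$ spokes, distributed around the whole polytope, fail in one coordinate; there are further coincidences such as $r_1(b_n)=r_1(b_1)$ on $C_b$ and the turnaround edges of $C_c$ and $C_d$. The substance of the proof is exactly the tabulation (the paper's Cases 1--4) showing that on every such edge the other coordinate differs by $1$. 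Until those comparisons are actually carried out, what you have is a correct plan with correct distance data, not a complete proof.
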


\begin{proof}
We show that for all $u= i_j \in V(S_n)$, $i\in \{a,b,c,d\}$, $1\leq
j\leq n$, and for each $v\in N(u)$, $d(u,a_1)-d(v,a_1)\neq 0$ or
$d(u,a_{k+1})-d(v, a_{k+1}) \neq 0$. Then Proposition \ref{prop1}
will concludes that the set $\{a_1,a_{k+1}\}$ is a local resolving
set of $S_n$. First note that, if $u = a_1$ or $u = a_{k+1}$, then
for each $v\in N(u)$, $d(u,u)=0\neq 1=d(v,u)$. Further, note that
$$ d(a_j,a_1) =
\left\{
              \begin{array}{ll}
           j-1,         &\,\,\,\,\,\,\ 1\leq j\leq k+1,\\
           2k-j+2,       &\,\,\,\,\,\,\ k+2\leq j\leq
           2k+1, \hphantom{aaaaaaaaaaaaaaaaaaaaaaaaaaaaaaaaaaaaaa}
            \end{array}
             \right.
$$
$$ d(b_j,a_{1}) =
\left\{
              \begin{array}{ll}
           j,         &\,\,\,\,\,\,\ 1\leq j\leq k,\\
           2k-j+2,       &\,\,\,\,\,\,\ k+1\leq j\leq
           2k+1, \hphantom{aaaaaaaaaaaaaaaaaaaaaaaaaaaaaaaaaaaaaa}
            \end{array}
             \right.
$$
$$ d(c_j,a_1) =
\left\{
              \begin{array}{ll}
           j+1,         &\,\,\,\,\,\,\ 1\leq j\leq k,\\
           2k-j+3,       &\,\,\,\,\,\,\ k+1\leq j\leq
           2k+1, \hphantom{aaaaaaaaaaaaaaaaaaaaaaaaaaaaaaaaaaaaaa}
            \end{array}
             \right.
$$
$$ d(d_j,a_1) =
\left\{
              \begin{array}{ll}
           j+2,         &\,\,\,\,\,\,\ 1\leq j\leq k,\\
           2k-j+4,       &\,\,\,\,\,\,\ k+1\leq j\leq
           2k+1, \hphantom{aaaaaaaaaaaaaaaaaaaaaaaaaaaaaaaaaaaaaa}
            \end{array}
             \right.
$$
$$ d(a_j,a_{k+1}) =
\left\{
              \begin{array}{ll}
           k-j+1,         &\,\,\,\,\,\,\ 1\leq j\leq k+1,\\
           j-k-1,       &\,\,\,\,\,\,\ k+2\leq j\leq
           2k+1, \hphantom{aaaaaaaaaaaaaaaaaaaaaaaaaaaaaaaaaaaaaa}
            \end{array}
             \right.
$$
$$ d(b_j,a_{k+1}) =
\left\{
              \begin{array}{ll}
           k-j+1,         &\,\,\,\,\,\,\ 1\leq j\leq k,\\
           j-k,       &\,\,\,\,\,\,\ k+1\leq j\leq
           2k+1, \hphantom{aaaaaaaaaaaaaaaaaaaaaaaaaaaaaaaaaaaaaa}
            \end{array}
             \right.
$$
$$ d(c_j,a_{k+1}) =
\left\{
              \begin{array}{ll}
           k-j+2,         &\,\,\,\,\,\,\ 1\leq j\leq k,\\
           j-k+1,       &\,\,\,\,\,\,\ k+1\leq j\leq
           2k+1, \hphantom{aaaaaaaaaaaaaaaaaaaaaaaaaaaaaaaaaaaaaa}
            \end{array}
             \right.
$$
$$ d(d_j,a_{k+1}) =
\left\{
              \begin{array}{ll}
           k-j+3,         &\,\,\,\,\,\,\ 1\leq j\leq k,\\
           j-k+2,       &\,\,\,\,\,\,\ k+1\leq j\leq
           2k+1. \hphantom{aaaaaaaaaaaaaaaaaaaaaaaaaaaaaaaaaaaaaa}
            \end{array}
             \right.
$$
Now, according to the above listed distances, the following four
cases conclude the proof.\\

\noindent Case 1: For each neighborhood $N(a_j) =\{a_{j-1}, a_{j+1},
b_i, b_{j-1}\}$ of $a_j$, $2\leq j\leq 2k+ \hphantom{aaaaaaa} 1\
(j\neq k+1)$, we have

$$\hphantom{aaaaaaa} |d(a_j,a_{1})-d(v,a_1)| =
\left\{
              \begin{array}{ll}
           0,       &\,\,\,\,\,\,\ 2\leq j\leq k;\ v= b_{j-1},\\
           0,       &\,\,\,\,\,\,\ k+2\leq j\leq 2k+1;\ v= b_{j},\\
           1,         &\,\,\,\,\,\,\ 2\leq j\leq k;\ v\in \{a_{j-1},a_{j+1},b_j\},\\
           1,         &\,\,\,\,\,\,\ k+2\leq j\leq 2k+1;\ v\in \{a_{j-1},a_{j+1},b_{j-1}\},
            \end{array}
             \right.
$$
$$\hphantom{aaaaaaa} |d(a_j,a_{k+1})-d(v,a_{k+1})| =
\left\{
              \begin{array}{ll}
           0,       &\,\,\,\,\ 2\leq j\leq k;\ v= b_{j},\\
           0,       &\,\,\,\,\ k+2\leq j\leq 2k+1;\ v= b_{j-1},\\
           1,         &\,\,\,\,\ 2\leq j\leq k;\ v\in \{a_{j-1},a_{j+1},b_{j-1}\},\\
           1,         &\,\,\,\,\ k+2\leq j\leq 2k+1;\ v\in
\{a_{j-1},a_{j+1},b_{j}\}.
            \end{array}
             \right.
$$
\noindent Case 2: For each neighborhood $N(b_j) =\{a_{j}, a_{j+1},
b_{j-1}, b_{j+1}, c_j\}$ of $b_j$, $1\leq j\leq \hphantom{aaaaaaaa}
2k+1\ (j\neq k+1)$, we have

$$\hphantom{aaa} |d(b_j,a_{1})-d(v,a_1)| =
\left\{
              \begin{array}{ll}
           0,         &\,\,\,\,\,\,\ j=1;\ v\in \{a_{j+1},b_{j-1}\},\\
           0,       &\,\,\,\,\,\,\ 2\leq j\leq k;\ v= a_{j+1},\\
           0,         &\,\,\,\,\,\,\ k+2\leq j\leq 2k;\ v = a_j,\\
           0,       &\,\,\,\,\,\,\ j = 2k+1;\ v\in \{a_j,b_{j+1}\},\\
           1,         &\,\,\,\,\,\,\ j=1;\ v\in \{a_{j},b_{j+1},c_j\},\\
           1,         &\,\,\,\,\,\,\ 2\leq j\leq k;\ v\in N(b_j)\setminus\{a_{j+1}\},\\
           1,         &\,\,\,\,\,\,\ j=k+1;\ \mbox{for all}\ v\in N(b_j),\\
           1,         &\,\,\,\,\,\,\ k+2\leq j\leq 2k;\ v\in N(b_j)\setminus\{a_{j}\},\\
           1,       &\,\,\,\,\,\,\ j = 2k+1;\ v\in \{a_{j+1},b_{j-1},c_{j}\},
            \end{array}
             \right.
$$
$$\hphantom{aaaaaaaa} |d(b_j,a_{k+1})-d(v,a_{k+1})| =
\left\{
              \begin{array}{ll}
           0,       &\,\,\,\,\,\,\ 1\leq j\leq k-1;\ v= a_{j},\\
           0,       &\,\,\,\,\,\,\ j = k;\ v\in \{a_j,b_{j+1}\},\\
           0,       &\,\,\,\,\,\,\ j = k+1;\ v\in \{a_{j+1},b_{j-1}\},\\
           0,         &\,\,\,\,\,\,\ k+2\leq j\leq 2k;\ v = a_{j+1},\\
           1,         &\,\,\,\,\,\,\ 1\leq j\leq k-1;\ v\in N(b_j)\setminus \{a_{j}\},\\
           1,       &\,\,\,\,\,\,\ j = k;\ v\in \{a_{j+1},b_{j-1},c_j\},\\
           1,       &\,\,\,\,\,\,\ j = k+1;\ v\in \{a_{j},b_{j+1},c_j\},\\
           1,         &\,\,\,\,\,\,\ k+2\leq j\leq 2k;\ v\in N(b_j)\setminus\{a_{j+1}\},\\
           1,         &\,\,\,\,\,\,\ j=2k+1;\ \mbox{for all}\ v\in N(b_j).\\
            \end{array}
             \right.
$$
\noindent Case 3: For each neighborhood $N(c_j) =\{b_{j}, c_{j-1},
c_{j+1}, d_j\}$ of $c_j$, $1\leq j\leq 2k+ \hphantom{aaaaaaaa} 1\
(j\neq k+1)$, we have $d(c_1,a_1) - d(c_{2k+1},a_1) = 0 =
d(c_k,a_{k+1})- \hphantom{aaaaaaaa} d(c_{k+1},a_{k+1})$ only. But
than $|d(c_1,a_{k+1})-d(c_{2k+1},a_{k+1})| = 1 = |d(c_k,a_1)-
\hphantom{aaaaaaaa} d(c_{k+1},a_1)|$.

\noindent Case 4: For each neighborhood $N(d_j) =\{c_{j}, d_{j-1},
d_{j+1}\}$ of $d_j$, $1\leq j\leq 2k+ \hphantom{aaaaaaaa} 1\ (j\neq
k+1)$, we have $d(d_1,a_1) - d(d_{2k+1},a_1) = 0 = d(d_k,a_{k+1})-
\hphantom{aaaaaaaa} d(d_{k+1},a_{k+1})$ only. But than
$|d(d_1,a_{k+1})-d(d_{2k+1},a_{k+1})| = 1 = |d(d_k,a_1)-
\hphantom{aaaaaaaa} d(d_{k+1},a_1)|$.
\end{proof}

For a vertex $v$ in $G$, the {\it eccentricity}, $ecc(v)$, is the
maximum distance between $v$ and any other vertex of $G$. The {\it
diameter} of $G$, denoted by $diam(G)$, is the maximum eccentricity
of a vertex $v$ in $G$. The following lemma and two properties,
proved by Kratica {\em et al.} \cite{kratica}, will be used in the
sequel.

\begin{Lemma}{\em \cite{kratica}}\label{lem4}
Let $u, v \in V(G),\ u\neq v$, and\\
$(i)\ d(w,v)\leq d(u,v)$ for each $w\in N(u)$ and\\
$(ii)\ d(u,w)\leq d(u,v)$ for each $w\in N(v)$.\\
Then there does not exist vertex $x\in V(G)$, $x\neq u,v$, that
strongly resolves the vertices $u$ and $v$.
\end{Lemma}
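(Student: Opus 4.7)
The plan is to argue by contradiction: assume some $x \in V(G) \setminus \{u,v\}$ strongly resolves the pair $u,v$, and then use the hypotheses $(i)$ and $(ii)$ to derive a conflict from the existence of a shortest path witnessing strong resolution. The underlying idea is that conditions $(i)$ and $(ii)$ say that neither $u$ nor $v$ has a neighbor that is strictly farther from the other endpoint, i.e., one cannot ``step past'' $u$ (resp.\ $v$) while still getting closer to something beyond; this blocks either vertex from lying interior to a shortest path ending outside $\{u,v\}$.

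First I would unpack the definition: if $x$ strongly resolves $u$ and $v$, then either $v$ lies on some shortest $u\text{--}x$ path, i.e., $d(u,x)=d(u,v)+d(v,x)$, or $u$ lies on some shortest $v\text{--}x$ path, i.e., $d(v,x)=d(v,u)+d(u,x)$. By symmetry between $u$ and $v$ (and between the two hypotheses), it suffices to handle one case; say $v$ lies on a shortest $u\text{--}x$ path $P\colon u = u_0, u_1, \ldots, u_k = v, u_{k+1}, \ldots, u_\ell = x$, where $k = d(u,v)$ and $\ell = d(u,x)$. Because $x\neq v$, we have $\ell > k$, so the vertex $w:=u_{k+1}$ exists.

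The crux is then a one-line observation: $w$ is a neighbor of $v$ and sits at position $k+1$ on a shortest $u\text{--}x$ path, so $d(u,w) = k+1 = d(u,v)+1 > d(u,v)$. This directly contradicts hypothesis $(ii)$, which demands $d(u,w)\le d(u,v)$ for every $w \in N(v)$. The symmetric case (with $u$ on a shortest $v\text{--}x$ path) uses the analogous neighbor of $u$ on that path together with hypothesis $(i)$ to give the same kind of contradiction. Hence no such $x$ can exist.

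I do not anticipate a real obstacle: the proof reduces to identifying the ``next step'' past $v$ (resp.\ $u$) on the purported shortest path and invoking the corresponding hypothesis. The only subtlety is making sure the stepping vertex $w$ genuinely exists, which follows from $x \notin \{u,v\}$, ensuring the shortest path in question has length strictly greater than $d(u,v)$.
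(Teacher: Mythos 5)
Your proof is correct. Note that the paper itself gives no proof of this lemma---it is quoted from Kratica et al.\ \cite{kratica} as a known result---so there is no in-paper argument to compare against; your argument (take the purported shortest $u$--$x$ path through $v$, observe that the vertex immediately after $v$ on it is a neighbor $w$ of $v$ with $d(u,w)=d(u,v)+1$, contradicting (ii), and symmetrically for the other case using (i)) is the standard and complete way to establish it, and the existence of the stepping vertex is correctly justified from $x\neq v$.
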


\begin{Property}{\em \cite{kratica}}\label{pro1}
If $S$ is a strong resolving resolving set of $G$, then for every
two distinct vertices $u,v\in V(G)$ which satisfy conditions $(i)$
and $(ii)$ of Lemma \ref{lem4}, we have $u\in S$ or $v\in S$.
\end{Property}

\begin{Property}{\em \cite{kratica}}\label{pro2}
If $S$ is a strong resolving resolving set of $G$, then for every
two distinct vertices $u,v\in V(G)$ such that $d(u,v) = diam(G)$, we
have $u\in S$ or $v\in S$.
\end{Property}

\begin{Lemma}\label{lem5}
For $n = 2k+1,\ k\geq 1$, if $S$ is a strong resolving set of $S_n$,
then $|S|\geq n$.
\end{Lemma}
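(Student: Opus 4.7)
The strategy is to exhibit, for each $i \in \{1,2,\ldots,n\}$, a pair $\{a_i, d_{i+k}\}$ (indices reduced modulo $n$) that is mutually maximally distant in the sense of Lemma~\ref{lem4}, and then invoke Property~\ref{pro1} to force each such pair to meet $S$. Since $n = 2k+1$ is odd, as $i$ ranges over $\{1,\ldots,n\}$ both $i$ and $i+k \pmod n$ sweep out all of $\{1,\ldots,n\}$, so these $n$ pairs use pairwise distinct vertices and form a perfect matching between $\{a_1,\ldots,a_n\}$ and $\{d_1,\ldots,d_n\}$. Any set meeting every such pair must then contain at least $n$ vertices, which is exactly the claim.

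By the rotational automorphism of $S_n$ sending $i_j \mapsto i_{j+1}$ for each $i \in \{a,b,c,d\}$, it suffices to verify the two hypotheses of Lemma~\ref{lem4} for the single pair $\{a_1, d_{k+1}\}$. The distance tables compiled in the proof of Lemma~\ref{lem3} give $d(a_1, d_{k+1}) = k+3$, so for condition~(ii) the three required inequalities $d(a_1, w) \leq k+3$ with $w \in N(d_{k+1}) = \{c_{k+1}, d_k, d_{k+2}\}$ are read straight off those tables as $k+2 \leq k+3$. For condition~(i) I need $d(w, d_{k+1}) \leq k+3$ for every $w \in N(a_1) = \{a_2, a_n, b_1, b_n\}$; the cases $w = a_2$ and $w = a_n$ again reduce, after a cyclic rotation of indices, to entries already recorded in Lemma~\ref{lem3} (specifically $d(a_1,d_k) = d(a_1, d_{k+2}) = k+2$), while the cases $w = b_1$ and $w = b_n$ are settled by exhibiting explicit paths of length $k+2$, such as $b_1 \to c_1 \to d_1 \to d_2 \to \cdots \to d_{k+1}$ and its mirror through $b_n \to c_n \to d_n \to \cdots \to d_{k+1}$.

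Once both conditions hold, Lemma~\ref{lem4} tells us that no vertex outside $\{a_1, d_{k+1}\}$ strongly resolves this pair, so Property~\ref{pro1} forces $a_1 \in S$ or $d_{k+1} \in S$; rotating the argument yields the same dichotomy for every pair $\{a_i, d_{i+k}\}$. Because these $n$ pairs are pairwise vertex-disjoint, the $n$ resulting forcing conditions are independent, and summing their indicator variables gives $|S| \geq n$. The one step that does not already follow from Lemma~\ref{lem3} is the estimate $d(b_1, d_{k+1}) \leq k+3$ together with its mirror $d(b_n, d_{k+1}) \leq k+3$; these are the main --- though routine --- obstacle, since they require a short independent path argument rather than a table lookup. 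Everything else is direct substitution into the distance formulas already proved.
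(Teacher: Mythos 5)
Your proof is correct, and the $n$ pairwise disjoint pairs $\{a_i,d_{i+k}\}$ you select are exactly the ones used in the paper, but you reach the forcing step by a genuinely different route. The paper simply observes that $d(a_i,d_{i+k})=k+3=diam(S_n)$ and applies Property \ref{pro2}, so that $a_i\in S$ or $d_{i+k}\in S$ follows in one line with no neighbor-checking: a diametral pair automatically satisfies conditions $(i)$ and $(ii)$ of Lemma \ref{lem4}, which is precisely why Property \ref{pro2} is a special case of Property \ref{pro1}. You instead verify those two conditions directly for $(a_1,d_{k+1})$ --- reading $d(c_{k+1},a_1)=d(d_k,a_1)=d(d_{k+2},a_1)=k+2$ off the tables of Lemma \ref{lem3}, handling $a_2$ and $a_n$ by rotation, and supplying explicit length-$(k+2)$ paths for $d(b_1,d_{k+1})$ and $d(b_n,d_{k+1})$ --- and then invoke Property \ref{pro1}. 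The trade-off is clear: the paper's argument is shorter but leans on the unproved assertion that $diam(S_n)=k+3$, whereas yours is self-contained given the distance formulas of Lemma \ref{lem3} and needs only the local fact that no neighbor of $a_1$ or of $d_{k+1}$ lies farther from the opposite endpoint than $k+3$. (A minor caveat for both arguments: the tables of Lemma \ref{lem3} are stated for $k\ge 2$, so the case $n=3$ technically needs a separate one-line check of $d(a_1,d_2)=4$.) The final counting step --- the $n$ pairs form a matching between the $a$-cycle and the $d$-cycle, hence $|S|\ge n$ --- is identical in both proofs.
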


\begin{proof}
Let us consider the pair $(a_i,d_{i+k})$ of vertices of $S_n$ for
$i=1,2,\ldots, n$. Then it is easy to see that $d(a_i,d_{i+k}) =
k+3$. Since $diam(S_n) = k+3$ so according to the Property
\ref{pro2}, $a_i\in S$ or $d_{i+k}\in S$ for all $i=1,2\ldots,n$.
Therefore $|S|\geq n$.
\end{proof}

\begin{Lemma}\label{lem6}
For $n = 2k+1,\ k\geq 1$, the subset $\{d_i\ |\ i = 1,2,\ldots,n\}$
of $V(S_n)$ is a strong resolving set of $S_n$.
\end{Lemma}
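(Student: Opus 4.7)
The plan is to show, for every pair of distinct vertices $u,v$ of $S_n$, that some $d_m\in D:=\{d_1,\dots,d_n\}$ strongly resolves $(u,v)$. Pairs with an endpoint in $D$ are trivial (the endpoint itself is a strong resolver), so I focus on $u,v\in\{a_i,b_i,c_i : 1\le i\le n\}$.

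The core observation I would establish first is
\[
d(v,d_m)=d(v,c_m)+1\quad\text{for every } v\in V(S_n)\setminus D \text{ and every } 1\le m\le n,
\]
realized by a shortest $v$-$d_m$ path whose final edge is $c_md_m$. This is because the only non-$d$-layer neighbor of $d_m$ is $c_m$, so any path from $v\notin D$ into the $d$-cycle must enter at some $c_j\to d_j$; since the $d$-cycle and the $c$-cycle have the same length, the entry point can be slid to $c_m\to d_m$ with no change in total length.

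Combined with the auxiliary identities $d(v,c_i)=d(v,b_i)+1$ for $v\in\{a_\ell,b_\ell\}$ and $d(a_i,d_j)=d(a_i,a_j)+3$ whenever the short cyclic arc in the $a$-cycle runs from $a_i$ forward through $a_j$, both verifiable by short direct computation in the spirit of Lemma \ref{lem3}, the case analysis runs as follows. (A) If $u=c_i$, then $d(v,d_i)=d(v,u)+d(u,d_i)$, so $u$ lies on a shortest $v$-$d_i$ path and $d_i$ strongly resolves $(u,v)$. (B) If $u=b_i$ and $v\in\{a_\ell,b_\ell\}$, the two identities give $d(v,d_i)=d(v,b_i)+2$, and the path $v\to\cdots\to b_i\to c_i\to d_i$ realizes $u=b_i$ on a shortest $v$-$d_i$ path. (C) If $u=a_i, v=a_j$ with $i\ne j$, relabel so that the short cyclic arc from $a_i$ to $a_j$ goes forward through $a_{i+1},\dots,a_j$; concatenating this arc with $a_j\to b_j\to c_j\to d_j$ gives a path of length $d(a_i,a_j)+3=d(a_i,d_j)$ passing through $v=a_j$, so $d_j$ strongly resolves.

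The main obstacle is proving the distance identity $d(v,d_m)=d(v,c_m)+1$, which requires ruling out any shortcut through the $d$-cycle. I would dispatch this by the length-preserving correspondence between $c$- and $d$-cycle traversals: any $v$-$d_m$ path entering the $d$-cycle at $d_j$ and then moving within $D$ to $d_m$ can be replaced by a $v$-$c_j$ path moving within the $c$-cycle to $c_m$, followed by the single descent $c_m\to d_m$, of the same total length. Once this and the two companion identities are in hand, each of (A)--(C) reduces to a one-line verification that the constructed concatenated path attains the precomputed distance to $d_m$, confirming that the candidate intermediate vertex lies on a shortest $v$-$d_m$ path.
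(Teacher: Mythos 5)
Your proof is correct, but it is organized around a different engine than the paper's. The paper proceeds by brute verification: it writes down the piecewise distance formulas $d(b_j,c_i)=d(c_j,d_i)$, derives the identities (15)--(17) to dispose of the pairs $(c_i,c_j),(c_i,b_j),(b_i,b_j)$, and then exhibits two explicit shortest paths $P_1,P_2$ from $d_i$ to $a_{i+k+1}$ to handle the pairs involving $a$-vertices --- which forces a split in which $d_i$ resolves $(a_i,a_j)$ only for $j$ on one arc and the remaining pairs must be handed off to $d_{i+k}$. You instead prove a single structural lemma, $d(v,d_m)=d(v,c_m)+1$ for all $v\notin D$, by a path-surgery argument (slide the entry edge $c_jd_j$ of any $v$--$d_m$ path around to $c_md_m$ using the equal circumference of the $c$- and $d$-cycles), and then everything reduces to one-line concatenations; moreover by always choosing the resolver $d_j$ indexed by the endpoint meant to lie in the interior of the path, you avoid the paper's hand-off to $d_{i+k}$ entirely. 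What your route buys is brevity and transparency --- the sliding argument is the real reason the distance formulas in the paper hold. What it costs is that the two companion identities still need genuine verification: in particular $d(a_i,b_j)=1+\min\bigl(d_C(i,j),d_C(i,j+1)\bigr)$ because of the antiprism edges $a_{j+1}b_j$, so the claim $d(a_i,b_j)=d(a_i,a_j)+1$ really does depend on the forward-arc normalization you impose in case (C); you flag this correctly, and for $n$ odd the normalization is always achievable (including the boundary case $d_C(i,j)=k$, where both arcs to $b_j$ tie), so there is no gap.
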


\begin{proof}
Let us prove that for each $i=1,2,\ldots,n$, the vertex $d_i$
strongly resolves the pairs $(c_i,a_j), (c_i,b_j), (c_i,c_j)_{(i\neq
j)}, (b_i,a_j), (b_i,b_j)_{(i\neq j)}$ and $(a_i,a_j)_{(j\neq
i,i+1,\ldots,i+k)}$, where $j = 1,2,\ldots,n$. It is easy to see
that $d(d_i,c_i) = 1$, $d(d_i,b_i) = 2 = d(d_i,c_i)+1$ and
$$ d(b_j,c_i) = d(c_j,d_i) =
\left\{
              \begin{array}{ll}
           j-i-1,         &\,\,\,\,\,\,\ i\leq j\leq i+k,\\
           n-j+i+1,       &\,\,\,\,\,\,\ i+k+1\leq j\leq
           i+n-1. \hphantom{aaaaaaaaaaaaaaaaaaaaaaaaaaaaaaaaaaaaaa}
            \end{array}
             \right.
$$
Note that
$$(i)\qquad\,\,\,\ d(c_j,c_i) = d(c_j,d_i)-1,\hphantom{aaaaaaaaaaaaaaaaaaaaaaaaaaaaaaaaaaaaaaaaaaaaaaaa}$$
$$\qquad \Rightarrow\,\ d(d_i,c_j) = d(c_i,c_j)+1 = d(d_i,c_i)+ d(c_i,c_j).\hphantom{aaaaaaaaaaaaaaaaaaaaaaaa} (15)$$
$$(ii)\qquad \,\ d(b_j,d_i) = d(c_j,d_i)+1 = d(b_j,c_i)+ d(c_i,d_i),\hphantom{aaaaaaaaaaaaaaaaaaaaaaaaaaaaaa}$$
$$\qquad \Rightarrow\,\ d(d_i,b_j) = d(d_i,c_i)+d(c_i,b_j).\hphantom{aaaaaaaaaaaaaaaaaaaaaaaaaaaaaaaaaaaa} (16)$$
$$(iii)\qquad d(b_j,b_i) = d(b_j,c_i)-1,\hphantom{aaaaaaaaaaaaaaaaaaaaaaaaaaaaaaaaaaaaaaaaaaaaaaaa}$$
so equation $(16)$ becomes
$$\qquad\,\ d(d_i,b_j) = d(d_i,c_i)+d(b_i,b_j)+1 = d(d_i,b_i)+d(b_i,b_j).\hphantom{aaaaaaaaaaaaaaaaaa} (17)$$
Equations $(15), (16)$ and $(17)$ conclude that the pairs
$(c_i,c_j),(c_i,b_j)$ and $(b_i,b_j)$ are strongly resolved by
$d_i$.

\noindent Now, consider the following shortest paths between $d_i$
and $a_{i+k+1}$:\\
\indent $\bullet\,\ P_1:\
d_i,c_i,b_i,a_i,a_{i-1},a_{i-2},\ldots,a_{i+k+2},a_{i+k+1}$;\\
\indent $\bullet\,\ P_2:\
d_i,c_i,b_i,a_{i+1},a_{i+2},\ldots,a_{i+k},a_{i+k+1}$.\\
Then one can easily see that for $j = i,i+k+1,i+k+2,\ldots,i+n-1$,
the pairs $(c_i,a_j), (b_i,a_j)$ and the pair $(a_i,a_j),\ j\neq
i,i+1,i+2,\ldots,i+k$, are strongly resolved by $d_i$ according to
the shortest $d_i-a_j$ path $P_1$ which contains the vertices
$a_i,b_i, c_i$; and for $j = i+1,i+2,\ldots,i+k$, the pairs
$(c_i,a_j)$ and $(b_i,a_j)$ are strongly resolved by $d_i$ according
to the shortest $d_i-a_j$ path $P_2$ which contains the vertices
$b_i$ and $c_i$.

Finally, the remaining pairs $(a_i,a_j),\ j= i+1,i+2,\ldots,i+k$,
are strongly resolved by $d_{i+k}$ because of the shortest
$d_{i+k}-a_i$ path $d_{i+k},c_{i+k}, b_{i+k},
a_{i+k},a_{i+k-1},\ldots,a_{i+1},a_i$, which contains the vertex
$a_j$.
\end{proof}

\begin{Lemma}\label{lem7}
For $n = 2k,\ k\geq 2$, if $S$ is a strong resolving set of $S_n$,
then $|S|\geq \frac{3n}{2}$.
\end{Lemma}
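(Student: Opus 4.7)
My plan is to apply Property \ref{pro1}: if $S$ is a strong resolving set, then for every pair of vertices $(u, v)$ satisfying the neighborhood conditions (i) and (ii) of Lemma \ref{lem4}, at least one of $u, v$ lies in $S$. I will exhibit $\frac{3n}{2}$ pairwise vertex-disjoint pairs satisfying these conditions, which immediately forces $|S| \geq \frac{3n}{2}$.

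The pairs come in two families. The first is the set of antipodal pairs on the cycle $C_a$, namely $(a_i, a_{i+k})$ for $i = 1, 2, \ldots, k$, giving $k$ disjoint pairs that exhaust $V(C_a)$. The second is the set of cross pairs $(b_i, d_{i+k})$ for $i = 1, 2, \ldots, n$, giving $n$ disjoint pairs that exhaust $V(C_b) \cup V(C_d)$. Since the first family uses only $a$-vertices and the second only $b$- and $d$-vertices, together they yield $k + n = \frac{3n}{2}$ pairwise disjoint pairs (the $c$-vertices play no role).

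The verifications are routine distance computations. For $(a_i, a_{i+k})$ I note $d(a_i, a_{i+k}) = k$ (any detour through $C_b$ costs $k+1$), and each of the four neighbors $a_{i-1}, a_{i+1}, b_{i-1}, b_i$ of $a_i$ is within distance $k$ of $a_{i+k}$ using $d(a_{i\pm 1}, a_{i+k}) = k-1$ and $d(b_{i-1}, a_{i+k}), d(b_i, a_{i+k}) \leq 1 + (k-1) = k$; the symmetric condition at $a_{i+k}$ follows from the rotational automorphism $v_j \mapsto v_{j+k}$ of $S_n$. For $(b_i, d_{i+k})$ I first observe that $d(b_i, d_{i+k}) = k+2 = diam(S_n)$, realized by the routes $b_i \to c_i \to d_i \to \cdots \to d_{i+k}$ and $b_i \to b_{i+1} \to \cdots \to b_{i+k} \to c_{i+k} \to d_{i+k}$. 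Condition (i) then requires checking the five neighbors $a_i, a_{i+1}, b_{i-1}, b_{i+1}, c_i$ of $b_i$, and condition (ii) requires the three neighbors $c_{i+k}, d_{i+k-1}, d_{i+k+1}$ of $d_{i+k}$; these reduce to the formulas $d(b_j, d_l) = 2 + d_{C_d}(d_j, d_l)$ and $d(a_j, d_l) = 3 + \min\{d_{C_d}(d_{j-1}, d_l), d_{C_d}(d_j, d_l)\}$, which are forced by the single-edge connections $b_j \sim c_j$ and $c_j \sim d_j$.

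The main subtlety concerns the equality case in condition (i): because $n = 2k$ is even, both neighbors $a_i$ and $a_{i+1}$ of $b_i$ satisfy $d(a_i, d_{i+k}) = d(a_{i+1}, d_{i+k}) = k+2$, matching $d(b_i, d_{i+k})$ exactly. This is precisely what breaks down for $n$ odd and is the structural source of the extra $\frac{n}{2}$ vertices in the even case compared with Lemma \ref{lem5}. The inequality in Lemma \ref{lem4} is weak, so equality is permitted; after confirming that the remaining neighbors $b_{i\pm 1}, c_i$ of $b_i$ (and symmetrically $c_{i+k}, d_{i+k\pm 1}$ of $d_{i+k}$) lie at distance exactly $k+1 < k+2$, Property \ref{pro1} applied to the $\frac{3n}{2}$ disjoint pairs delivers $|S| \geq \frac{3n}{2}$.
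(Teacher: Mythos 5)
Your proof is correct and takes essentially the same route as the paper: both arguments use exactly the same $\frac{3n}{2}$ pairwise disjoint pairs, namely $(b_i,d_{i+k})$ for $i=1,\dots,n$ and the antipodal pairs $(a_i,a_{i+k})$ for $i=1,\dots,k$, and force one vertex of each pair into $S$. The only cosmetic difference is that the paper disposes of the pairs $(b_i,d_{i+k})$ via Property \ref{pro2} (they are diametral, $d(b_i,d_{i+k})=k+2=diam(S_n)$), whereas you verify the neighborhood conditions of Lemma \ref{lem4} directly and invoke Property \ref{pro1}; these are equivalent, since a diametral pair automatically satisfies those conditions.
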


\begin{proof}
Consider the pair $(b_i,d_{i+k})$ of vertices of $S_n$ for
$i=1,2,\ldots, n$. Then $d(a_i,d_{i+k}) \\= k+2$. As $diam(S_n) =
k+2$, so according to the Property \ref{pro2}, $b_i\in S$ or
$d_{i+k}\in S$ for all $i=1,2\ldots,n$. Moreover, the vertices in
pair $(a_i,a_{i+k}),\ i=1,2,\ldots,k$, satisfy both the conditions
of Lemma \ref{lem4}. So according to the Property \ref{pro1},
$a_i\in S$ or $a_{i+k}\in S$ for all $i=1,2\ldots,k$. Therefore $|S|
\geq n+k =\frac{3n}{2}$.
\end{proof}

\begin{Lemma}\label{lem8}
For $n = 2k,\ k\geq 2$, the subset $\{d_i, a_{i'}\ |\ i =
1,2,\ldots,n;\ i'=1,2,\ldots,k\}$ of $V(S_n)$ is a strong resolving
set of $S_n$.
\end{Lemma}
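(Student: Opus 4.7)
The plan is to verify that for every pair of distinct vertices $(u,v)$ of $S_n$, some member of $S = \{d_1,\ldots,d_n,a_1,\ldots,a_k\}$ strongly resolves $(u,v)$. The starting reduction is that every vertex $w$ trivially strongly resolves all pairs containing it, because $d(w,v) = d(w,w) + d(w,v)$. Hence the verification reduces to pairs $(u,v)$ with both $u,v \in V(S_n)\setminus S = \{a_{k+1},\ldots,a_n\}\cup\{b_1,\ldots,b_n\}\cup\{c_1,\ldots,c_n\}$. In particular, the ``forced'' antipodal pairs $(a_i,a_{i+k})$ singled out in Lemma~\ref{lem7} are all handled by this trivial step, since for each such pair at least one of $a_i, a_{i+k}$ lies in $\{a_1,\ldots,a_k\} \subseteq S$.

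For the remaining pairs I would follow the template used in the proof of Lemma~\ref{lem6}, showing that each is strongly resolved by some $d_\ell$. The key is to exhibit, for each $\ell$, explicit shortest paths from $d_\ell$ that pass through $c_\ell$, $b_\ell$, and (where needed) onward along $C_a$. In summary: $d_i$ strongly resolves every $(c_i,c_j)$, $(c_i,b_j)$ and $(b_i,b_j)$ with $i\neq j$ via paths of the form $d_i,c_i,c_{i\pm 1},\ldots$ or $d_i,c_i,b_i,b_{i\pm 1},\ldots$ carrying the anchor $c_i$ or $b_i$; $d_j$ strongly resolves every $(c_j,a_i)$ and $(b_j,a_i)$ with $i\in\{k+1,\ldots,n\}$, because the shortest $d_j$--$a_i$ paths begin $d_j\to c_j\to b_j\to\cdots\to a_i$ and therefore contain both $c_j$ and $b_j$; and $d_j$ strongly resolves every $(a_i,a_j)$ with $k+1\leq i<j\leq n$ via the path $P_j: d_j,c_j,b_j,a_j,a_{j-1},\ldots,a_i$ of length $(j-i)+3$, in which $a_j$ plainly lies.

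The main obstacle is justifying that the exhibited paths are in fact shortest. Three kinds of detour have to be excluded: traversing the long arc of $C_a$, $C_b$, or $C_c$ (which has length at least $n-(j-i)\geq k+1$, strictly greater than the short-arc distance $\leq k-1$ used by our paths, so the short arc is always preferred when $n=2k$); routing through $C_d$ before descending to $c_\ell$ (this only lengthens the prefix $d_\ell\to c_\ell$, since $d_\ell$ has no other neighbors outside $C_d$); and executing an off-cycle excursion through a $b$- or $c$-rung, which costs at least two extra edges without compensating progress toward the endpoint. These are exactly the same sorts of checks already carried out in the distance tabulations of Lemma~\ref{lem3}, now specialized to $n=2k$. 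Once these shortest-path claims are in place, the three bullets above cover every pair of vertices outside $S$, and together with the trivial resolution of pairs touching $S$ this completes the proof that $S$ is a strong resolving set of $S_n$.
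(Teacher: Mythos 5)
Your proposal is correct and follows the same overall template as the paper's proof: reduce to pairs of vertices lying outside $S$ (the paper relies on the same reduction, stated in its introduction), then exhibit for each remaining pair a shortest path from some $d_\ell$ containing one member of the pair. Your first two bullets, covering pairs within $V(C_b)\cup V(C_c)$ and the pairs $(b_j,a_i)$, $(c_j,a_i)$, match the paper's paths $P_1$--$P_6$ and its table of shortest $d_i$-$v_j$ paths almost verbatim. Where you genuinely diverge is the pairs $(a_i,a_j)$ with $k+1\leq i<j\leq n$: the paper resolves most of these by the wrap-around path $P_6$ through $a_\ell$ and then needs a separate second step, using the basis vertex $a_{\ell+k-1}$ on $C_a$ and the arc $a_{\ell+k-1},a_{\ell+k},\ldots,a_j$, to catch the leftover pairs $(a_{\ell+k},a_j)$. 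Your observation that $j-i\leq k-1$ forces $d(a_i,d_j)=d(a_i,a_j)+3$, so that $d_j$ alone strongly resolves every such pair via $d_j,c_j,b_j,a_j,a_{j-1},\ldots,a_i$, is a clean simplification that eliminates that case split entirely. Two small cautions: not every shortest $d_j$-$a_i$ path passes through both $c_j$ and $b_j$ (one can also descend via $d_j,c_j,c_{j+1},b_{j+1},\ldots$), only some do, which is all you need; and because of the diagonal edges $a_{\ell+1}b_\ell$, a $b$-to-$a$-to-$b$ excursion costs only one extra edge rather than two, though your conclusion that it never yields a shortcut still holds. Your shortest-path justifications are sketched at roughly the same level of detail as the paper's, which likewise asserts rather than proves minimality of its listed paths.
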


\begin{proof}
First we prove that for each $i=1,2,\ldots,n$, the vertex $d_i$
strongly resolves the pairs $(c_i,b_j), (c_i,c_j)_{(i\neq
j)},(b_i,b_j)_{(i\neq j)}$ for all $j=1,2\ldots,n$ and the pairs
$(c_i,a_j), (b_i,a_j)$ for $j = k+1,k+2,\ldots,n$. To this end, we
consider the following shortest paths:\\
\indent $\bullet\,\ P_1:\
d_i,c_i,c_{i+1},\ldots,c_{i+k-1},c_{i+k}$;\\
\indent $\bullet\,\ P_2:\
d_i,c_i,c_{i-1},c_{i-2},\ldots,c_{i+k+1},c_{i+k}$;\\
\indent $\bullet\,\ P_3:\ d_i,c_i,b_i,b_{i+1},\ldots,b_{i+k-1},b_{i+k}$;\\
\indent $\bullet\,\ P_4:\
d_i,c_i,b_i,b_{i-1},b_{i-2},\ldots,b_{i+k+1},b_{i+k}$;\\
\indent $\bullet\,\ P_5:\ d_i,c_i,b_i,a_{i+1},\ldots,a_{i+k-1},a_{i+k}$;\\
\indent $\bullet\,\ P_6:\
d_i,c_i,b_i,a_i,a_{i-1},a_{i-2},\ldots,a_{i+k+2},a_{i+k+1}$.\\
Note that, each above mentioned pair is strongly resolved by $d_i$
because of the existence of a shortest $d_i-v_j$ path $(v\in
\{a,b,c\})$ as shown in Table \ref{table1}.

\begin{table}
 \begin{center}
\begin{tabular}{|c|c|c|}
               \hline
               % after \\: \hline or \cline{col1-col2} \cline{col3-col4} ...
               pair & for $i+1\leq j\leq i+k$  & for $j = i,i-1,i-2,\ldots,i+k+1$  \\
               \hline
               $(c_i,c_j)$ & $v = c,\ P_1$ contains $c_i$ & $v = c,\ P_2$ contains $c_i$\\
               $(b_i,b_j),(c_i,b_j)$ & $v=b,\ P_3$ contains $b_i,c_i$ & $v=b,\ P_4$ contains $b_i,c_i$ \\
               $(b_i,a_j), (c_i,a_j)$ & $v=a,\ P_5$ contains $b_i,c_i$ & $v=a,\ P_6$ contains $b_i,c_i$ \\
               \hline
             \end{tabular}
 \end{center}
 \caption{Shortest $d_i-v_j$ paths $(v\in
\{a,b,c\})$}\label{table1}
\end{table}

Moreover, according to the path $P_6$ between $d_i$ and $a_{i+k+1}$,
each pair $(a_l,a_m)$ is also strongly resolved by $d_i$, where
$i+k+1\leq l,m\leq i+n-1\ (l\neq m)$. Finally, each pair
$(a_{i+k},a_j),\ j=i+k+1,i+k+2,\dots,i+n-1$, is strongly resolved by
$a_{i+k-1}$ because of the shortest $a_{i+k-1}-a_j$ path
$a_{i+k-1},a_{i+k},\ldots,a_j$, which contains the vertex $a_{i+k}$.
\end{proof}

\noindent {\it Proof of Theorem \ref{th1}.}\ We have the following
two cases:\\

\noindent Case 1: $n$ is even. Since $lmd(G)\leq \beta(G)$, by
equation $(1)$, and $\beta(S_n) = 3$ \cite{imran} \hphantom{aaaaaaa}
so $lmd(S_n)\leq 3$. For the lower bound, if we suppose that a
subset $\{u,v\}$ \hphantom{aaaaaaa} of $V(S_n)$ is a local resolving
set of $S_n$. Then either both $u$ and $v$ belong \hphantom{aaaaaaa}
to the same cycle $C_i,\ i\in \{a,b,c,d\}$ (but, it is not possible
according to \hphantom{aaaaaaa} Lemma \ref{lem1}), or $u$ and $v$
belong to the different cycles $C_i$ and $C_j$, respectively,
\hphantom{aaaaaaa} where $i,j\in \{a,b,c,d\},\ (i\neq j)$ (but, it
is not possible according to Lemma \hphantom{aaaaaaa} \ref{lem2}).
Hence, no two vertices of $S_n$ form a local resolving set of $S_n$.
Therefore \hphantom{aaaaaaa} $lmd(S_n)\geq 3$.

\noindent Case 2: $n$ is odd. As $lmd(G) = 1$ if and only if $G$ is
a bipartite graph \cite{okamoto} and $S_n$ \hphantom{aaaaaaa} is not
a bipartite graph, so Lemma \ref{lem3} concludes that $lmd(S_n) =
2$.\hphantom{aaaa}$\square$
\\
\\
\noindent {\it Proof of Theorem \ref{th2}.}\ When $n$ is odd, then
Lemma \ref{lem5} and Lemma \ref{lem6} conclude the proof; and when
$n$ is even, then Lemma \ref{lem7} and Lemma \ref{lem8} conclude the
proof.\hphantom{aaaaaaaaaaaaaaaaaaaaaaaaaaaaaaaaaaaaaaaaaaaaaaaaaaaaaaaaaaaaaaaaaa}$\square$

%%%--------------------------------------------------------------------------
\section{Convex Polytopes $U_n$}
The convex polytopes $U_n$, $n\geq 3$, \cite{imran} (Figure
\ref{fig2}) consists of $n$ 4-sided faces, $2n$ 5-sided faces and a
pair of $n$-sided faces obtained by the combination of a convex
polytope $\mathbb{D}_n$ and a prism $D_n$ having vertex and edge
sets as:
$$V(U_n)=\{a_i,b_i,c_i,d_i, e_i\ |\ 1\leq i\leq n\},\hphantom{aaaaaaaaaaaaaaaaaaaaaaaaaaaaaaaaaaaaaaaaaaaaaaa}$$
$$E(U_n)=\{a_ia_{i+1}, b_ib_{i+1}, e_ie_{i+1}, a_ib_i, b_ic_i, c_id_i, d_ie_i, c_{i+1}d_i\ |\ 1\leq i\leq n\}.\hphantom{aaaaaaaaaaaaaaaaaaa}$$

\begin{figure}[h]
        \centerline
        {\includegraphics[width=13cm]{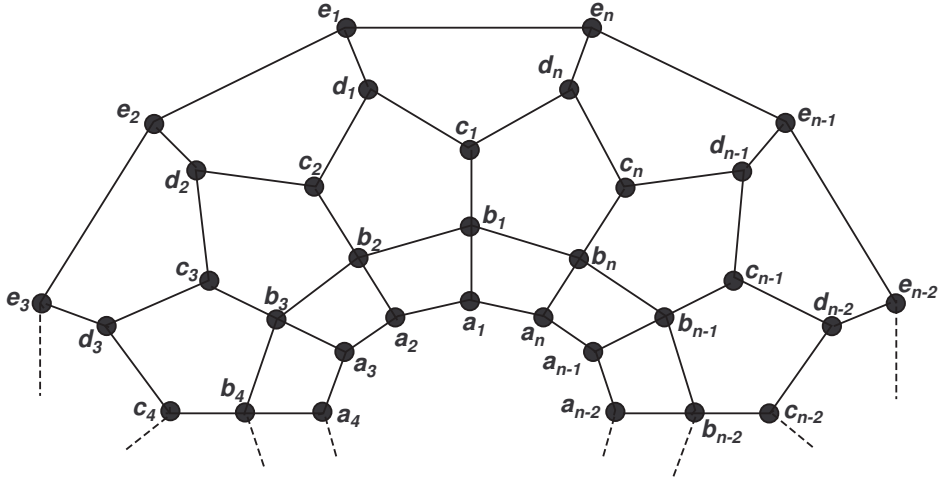}}
        \caption{The graph of convex polytope
        $U_n$}\label{fig2}
\end{figure}
The metric dimension of $U_n$ was studied in \cite{imran}. In this
section, we show that $lmd(U_n) = 2$  for all $n\geq 3$. Moreover,
we show that $sdim(U_n) = 2n$ when $n$ is odd and $sdim(U_n) =
\frac{5n}{2}$ when $n$ is even. The main results of this section are
the following:

\begin{Theorem}\label{th3}
For any convex polytope $U_n$, $n\geq 3$, we have $ lmd(U_n) = 2$.
\end{Theorem}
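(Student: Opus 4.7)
The plan is to prove $lmd(U_n) = 2$ by establishing matching lower and upper bounds.

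For the lower bound $lmd(U_n) \geq 2$, I would invoke the Okamoto et al.\ characterization that $lmd(G) = 1$ if and only if $G$ is bipartite, exactly as was done at the end of the proof of Theorem \ref{th1}. To apply it, I only need to exhibit an odd cycle in $U_n$. From the edge set, the sequence $b_i, c_i, d_i, c_{i+1}, b_{i+1}, b_i$ is a $5$-cycle, so $U_n$ is not bipartite and hence $lmd(U_n) \geq 2$.

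For the upper bound, I would propose an explicit two-vertex candidate set $W \subseteq V(U_n)$ on the outer rim, and verify via Proposition \ref{prop1} that every edge $uv$ of $U_n$ is resolved by $W$. Following the template of Lemma \ref{lem3}, a natural choice is $W = \{a_1, a_{\lfloor n/2\rfloor +1}\}$ (treating $n=2k+1$ and $n=2k$ uniformly, though I would split into the two parity cases during verification, and adjust the second vertex to $a_{k+1}$ or $a_{k+2}$ if needed so that the distance tables line up). The first step is to write down closed-form expressions for $d(v_j, a_1)$ and $d(v_j, a_{\lfloor n/2\rfloor +1})$ for each $v \in \{a,b,c,d,e\}$ and each index $j$, which is routine because $U_n$ has the layered structure (outer $a$-cycle, then $b$-cycle, then the zig-zag $c$-$d$ layer, then the inner $e$-cycle) in which shortest paths go straight inward to the appropriate layer and then around the corresponding cycle.

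With the distance tables in hand, the verification proceeds edge-type by edge-type: the rim edges $a_ja_{j+1}$, $b_jb_{j+1}$, $e_je_{j+1}$; the radial edges $a_jb_j$, $b_jc_j$, $c_jd_j$, $d_je_j$; and the diagonal edges $c_{j+1}d_j$. For each edge type and each range of $j$ I check whether the distance from $a_1$ already separates the endpoints; if it ties (which will only happen for indices symmetric about the line through $a_1$ and $a_{\lfloor n/2\rfloor +1}$), the distance from $a_{\lfloor n/2\rfloor +1}$ must then separate them. The main obstacle will be the diagonal edge $c_{j+1}d_j$, because it crosses between two different ``columns'' of the polytope and therefore does not interact with the cycle symmetry in the same clean way as the purely radial or purely tangential edges; this is also where the $U_n$ case genuinely differs from $S_n$, and it is precisely the presence of these $5$-sided faces (which already forced the lower bound) that I expect to rescue the even-$n$ case so that no third basis vertex is needed, in contrast to Theorem \ref{th1}. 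Once all edge types are cleared, Proposition \ref{prop1} gives that $W$ is a local resolving set, completing the proof.
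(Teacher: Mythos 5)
Your lower bound is correct and identical to the paper's: the $5$-cycle $b_i,c_i,d_i,c_{i+1},b_{i+1},b_i$ shows $U_n$ is not bipartite, so $lmd(U_n)\geq 2$ by the Okamoto et al.\ characterization. Your upper bound for odd $n=2k+1$ also matches the paper (Lemma \ref{lem9} uses exactly $W=\{a_1,a_{k+1}\}$ and verifies it via the distance tables you describe).

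The gap is in the even case $n=2k$: the set $W=\{a_1,a_{k+1}\}$ is \emph{not} a local resolving set there, and neither is your fallback $\{a_1,a_{k+2}\}$ --- in fact no two vertices of the outer cycle work. Using the layered structure one gets $d(e_j,a_1)=4+\min\{d(a_j,a_1),d(a_{j+1},a_1)\}$, and for $n=2k$ this gives $d(e_k,a_1)=d(e_{k+1},a_1)=k+3$ and $d(e_k,a_{k+1})=d(e_{k+1},a_{k+1})=4$, so the adjacent pair $e_k,e_{k+1}$ (and likewise $e_n,e_1$) is unresolved by $\{a_1,a_{k+1}\}$. If you instead take $\{a_1,a_m\}$ with $m\neq k+1$, then the diagonal edges kill you: $c_{j+1}d_j$ is unresolved by $a_1$ exactly for $j\in\{1,\dots,k\}$ and unresolved by $a_m$ exactly for $j\in\{m,\dots,m+k-1\}$ (indices mod $n$), and these two $k$-element subsets of $\mathbb{Z}_{2k}$ are disjoint only when $m=k+1$; e.g.\ $\{a_1,a_{k+2}\}$ leaves the edge $c_2d_1$ unresolved, since $d(c_2,a_1)=d(d_1,a_1)=3$ and $d(c_2,a_{k+2})=d(d_1,a_{k+2})=k+2$. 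This is precisely why the paper's Lemma \ref{lem10} abandons the outer cycle for even $n$ and takes $W=\{c_1,c_k\}$ for $k\geq 5$, with ad hoc choices $\{c_1,e_1\}$, $\{c_1,c_2\}$, $\{c_1,d_4\}$ for $k=2,3,4$. So your proof is complete for odd $n$ but needs an entirely different witness set (and a separate verification) for even $n$; the heuristic that the $5$-faces ``rescue'' the even case is right about the final answer but does not survive with your choice of $W$.
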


\begin{Theorem}\label{th4}
For any convex polytope $U_n$, $n\geq 3$, we have
$$ sdim(U_n) =
\left\{
              \begin{array}{ll}
           2n,         &\,\,\,\,\,\,\ \mbox{if}\ n\ \mbox{is odd},\\
           \frac{5n}{2},         &\,\,\,\,\,\,\ \mbox{if}\ n\ \mbox{is
           even}.
            \end{array}
             \right.
$$
\end{Theorem}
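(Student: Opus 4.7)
The plan is to mirror the architecture of Theorem \ref{th2}: for each parity of $n$, establish matching lower and upper bounds via separate lemmas, deriving the lower bound from Properties \ref{pro1} and \ref{pro2} and the upper bound by exhibiting an explicit strong resolving set. As a preliminary step I would compute $diam(U_n)$; since any $a$-to-$e$ path must cross the full spoke $a_i$--$b_i$--$c_i$--$d_i$ (or use the zigzag $c_{i+1}d_i$) before moving along the $e$-cycle, I expect $diam(U_n) = \lfloor n/2 \rfloor + 4$, realised by pairs $(a_i, e_{i+\lfloor n/2 \rfloor})$.

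For the lower bound, the diameter pairs $(a_i, e_{i+\lfloor n/2\rfloor})$ alone contribute only $n$ forced vertices via Property \ref{pro2}. In the even case $n=2k$, the three length-$n$ cycles $C_a, C_b, C_e$ each contain antipodal pairs $(x_i, x_{i+k})$; the crucial verification is that each such pair still satisfies both hypotheses of Lemma \ref{lem4} in all of $U_n$ (not merely within its own cycle), which requires checking the distances from the off-cycle neighbours of $x_i$ and $x_{i+k}$. If so, Property \ref{pro1} supplies $3k=\tfrac{3n}{2}$ further forced vertices, disjoint in layer from those forced by the diameter pairs, yielding $|S|\geq \tfrac{5n}{2}$. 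In the odd case $n=2k+1$ I would look for a second family of Lemma \ref{lem4}-pairs --- plausibly $(a_i, d_{i+k})$ or $(b_i, e_{i+k})$, chosen so that no neighbour of either endpoint violates the distance bound --- supplying the remaining $n$ constraints on top of the diameter pairs.

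For the upper bound I would propose, for odd $n$, the set $S=\{a_i, d_i\ |\ 1\leq i\leq n\}$ of size $2n$, and for even $n$ the set $S=\{a_i, d_i\ |\ 1\leq i\leq n\}\cup\{b_1,b_2,\ldots,b_k\}$ of size $\tfrac{5n}{2}$. In each case the verification proceeds as in Lemmas \ref{lem6} and \ref{lem8}: for every pair $(x,y)$ of vertices I would exhibit a landmark $w \in S$ together with an explicit shortest $w$--$x$ or $w$--$y$ path containing the other endpoint. The argument would be organised in a table indexed by the layers $\{a,b,c,d,e\}$ of $x$ and $y$, with each cell listing a family of shortest paths from $a$- or $d$-landmarks through the required intermediate vertex.

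The main obstacle is the upper-bound verification. With five layers and the zigzag edges $c_{i+1}d_i$, the $c$- and $d$-vertices together form a $2n$-cycle that admits shortest paths detouring through the $b$- or $e$-layer, so the shortest-path catalogue from each landmark is substantially larger and more case-heavy than in the $S_n$ proof. A secondary difficulty arises in the odd-case lower bound: the naive candidates $(a_i, a_{i+k})$, $(b_i, b_{i+k})$, $(e_i, e_{i+k})$ fail Lemma \ref{lem4} because their off-cycle neighbours sit strictly further from the "antipode" than the antipode itself, so the correct second family of Lemma \ref{lem4}-pairs must be discovered by inspection of small $n$ and then confirmed uniformly.
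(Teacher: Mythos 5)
Your overall architecture matches the paper's (diameter pairs with Property \ref{pro2}, a second family of Lemma \ref{lem4}-pairs with Property \ref{pro1}, then an explicit set for the upper bound), but the specific lower-bound pairs you propose do not work, and this is a genuine gap rather than a deferred verification. In the even case $n=2k$ the antipodal pairs $(a_i,a_{i+k})$, $(b_i,b_{i+k})$, $(e_i,e_{i+k})$ fail exactly the test you yourself flag for the odd case: for instance $b_i\in N(a_i)$ has $d(b_i,a_{i+k})=k+1>k=d(a_i,a_{i+k})$, so hypothesis $(i)$ of Lemma \ref{lem4} fails and Property \ref{pro1} yields nothing. Worse, even if these pairs did qualify, your count would not add up: the constraints $\{a_i,e_{i+k}\}$, $\{a_i,a_{i+k}\}$, $\{b_i,b_{i+k}\}$, $\{e_i,e_{i+k}\}$ are \emph{not} layer-disjoint, and the set consisting of all $a_i$, half the $b_i$ and half the $e_i$ (only $2n$ vertices) already meets every one of them, so this route cannot reach $\tfrac{5n}{2}$. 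The same two defects hit your odd-case candidates: for $(a_i,d_{i+k})$ the neighbour $e_{i+k}\in N(d_{i+k})$ has $d(a_i,e_{i+k})=k+4>k+3=d(a_i,d_{i+k})$, violating $(ii)$; for $(b_i,e_{i+k})$ the neighbour $a_i\in N(b_i)$ violates $(i)$; and both families share a layer with the diameter pairs, so all $a$'s (resp.\ all $e$'s) satisfy every constraint with only $n$ vertices. The paper's choices --- $(c_i,d_{i+k})$ for odd $n$, and $(c_i,d_{i+2})$ together with $(d_j,d_{j+k})$ for even $n$ --- are confined to the $c$- and $d$-layers precisely so that the vertices they force are disjoint from those forced by the $(a_i,e_{i+k})$ diameter pairs. (Also, for even $n$ the diameter is $k+3$, not $\lfloor n/2\rfloor+4$: descending through the edge $c_{i+k+1}d_{i+k}$ saves a step, and Property \ref{pro2} only applies to genuine diameter pairs.)

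On the upper bound, your candidates $\{a_i,d_i\}$ and $\{a_i,d_i\}\cup\{b_1,\dots,b_k\}$ have the correct cardinalities but differ from the paper's sets $\{a_i,c_i\}$ and $\{a_i,c_i\}\cup\{d_1,\dots,d_k\}$. Because your sets omit the whole $c$-layer, every pair $(c_i,c_j)$, $(c_i,b_j)$, $(c_i,e_j)$ must be strongly resolved explicitly by an $a$- or $d$-landmark, and you give no argument for these; the verification may well succeed, but as written the proposal establishes neither the lower nor the upper inequality.
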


Let $V_i= \{i_j\ |\ j=1,2,\ldots,n\},\ i\in \{a,b,c,d,e\}$ be
mutually disjoint subsets of $V(U_n)$. Now, we prove several lemmas
which support the proofs of Theorem \ref{th3} and Theorem \ref{th4}.

\begin{Lemma}\label{lem9}
For $n=2k+1,\ k\geq 1$, $W = \{a_1,a_{k+1}\}$ is a local resolving
set of $U_n$.
\end{Lemma}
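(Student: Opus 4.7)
The plan is to invoke Proposition \ref{prop1}: it suffices to exhibit, for every $u\in V(U_n)$ and every $v\in N(u)$, a vertex $w\in\{a_1,a_{k+1}\}$ with $d(u,w)\neq d(v,w)$. The argument parallels the proof of Lemma \ref{lem3}; the main new feature is the ``rhombus'' adjacency of the $c$- and $d$-layers, namely that each $c_j$ is adjacent to $b_j,d_{j-1},d_j$ while each $d_j$ is adjacent to $c_j,c_{j+1},e_j$, so that the $c,d$-vertices do not form two separate cycles but rather a single length-$2n$ zig-zag $c_1d_1c_2d_2\cdots c_nd_nc_1$.

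First I would compute, for each $i\in\{a,b,c,d,e\}$ and each $j\in\{1,\dots,2k+1\}$, the distance $d(i_j,a_1)$, splitting $j$ into the clockwise range $1\leq j\leq k+1$ and the counterclockwise range $k+2\leq j\leq 2k+1$. A shortest path from $i_j$ to $a_1$ first traverses the spoke $e_j\text{--}d_j\text{--}c_j\text{--}b_j\text{--}a_j$ up to the $a$-layer (using the extra edge $c_{j+1}d_j$ when it shortens the $c,d$-segment) and then runs along the shorter arc of the outer cycle $C_a$, giving piecewise-linear formulae of the same shape as those appearing in Lemma \ref{lem3}. Because $U_n$ admits a $k$-step rotational automorphism, the distances from $a_{k+1}$ are obtained from those to $a_1$ by shifting the second index by $k$, so only one distance table really has to be derived from scratch.

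With the tables in hand, I would partition the verification by the orbit of $u$: five cases $u\in V_a,V_b,V_c,V_d,V_e$, and within each case walk through the (three or four) neighbors in turn. The trivial observation that $d(u,u)=0\neq 1=d(v,u)$ dispatches the cases $u=a_1$ and $u=a_{k+1}$ immediately. For every other edge one compares the pair $(d(u,a_1),d(u,a_{k+1}))$ with $(d(v,a_1),d(v,a_{k+1}))$; in the generic situation the two coordinates differ by exactly $1$ in at least one slot, which is enough.

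The main obstacle will be the antipodal edges, i.e.\ the few edges near the index $j=k+1$ whose endpoints happen to be equidistant from $a_1$ because the two symmetric arcs of the relevant layer cycle have equal length, and, symmetrically, the edges near $j=1$ or $j=2k+1$ whose endpoints are equidistant from $a_{k+1}$. On the pure cycles $C_a,C_b,C_e$ one landmark always repairs the tie left by the other, exactly as in Lemma \ref{lem3}. The truly delicate cases are the rhombus edges $c_{k+1}d_k$, $c_{k+2}d_{k+1}$ and the edges $d_ke_k$, $d_{k+1}e_{k+1}$, where the slight asymmetry of the adjacency $c_{j+1}\sim d_j$ forces a careful parity check around the zig-zag on the $c,d$-layer; once it is verified there that the two landmark coordinates cannot coincide simultaneously, Proposition \ref{prop1} completes the proof.
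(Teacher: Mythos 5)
Your proposal follows essentially the same route as the paper's proof: invoke Proposition \ref{prop1}, tabulate the distances from every vertex to $a_1$ and to $a_{k+1}$ (the paper writes out both tables explicitly instead of appealing to the rotational automorphism), and check each edge by comparing the two coordinates. The structural points you flag are accurate --- the $c$- and $d$-vertices form a single zig-zag cycle via the edges $c_id_i$ and $c_{i+1}d_i$, and wherever one landmark coordinate ties on an edge the other one breaks it --- so carrying out the deferred tabulation and edge-by-edge check would finish the argument exactly as the paper does.
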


\begin{proof}
One can easily see that the list of distances (given below) of each
element of the set $V_i,\ i\in \{a,b,c,d,e\}$, with $a_1$ and
$a_{k+1}$ concludes that for each $u\in N(i_j),\ 1\leq j\leq 2k+1$,
either $d(u,a_1)\neq d(i_j,a_1)$ or $d(u,a_{k+1})\neq
d(i_j,a_{k+1})$, and hance the subset $\{a_1,a_{k+1}\}$ of $V(U_n)$
is a local resolving set of $U_n$, by Proposition \ref{prop1}.
$$ d(a_j,a_l)=
\left\{
              \begin{array}{ll}
           j-1,         &\,\,\,\,\,\,\ 1\leq j\leq k+1;\ l=1,\\
           2k-j+2,       &\,\,\,\,\,\,\ k+2\leq j\leq 2k+1;\ l=1,\\
           k-j+1,         &\,\,\,\,\,\,\ 1\leq j\leq k+1;\ l=k+1,\\
           j-k-1,       &\,\,\,\,\,\,\ k+2\leq j\leq 2k+1;\ l=k+1,\hphantom{aaaaaaaaaaaaaaaaaaaaaaaaaaaaaaaaaaaaaa}
            \end{array}
             \right.
$$
$$ d(b_j,a_l)=
\left\{
              \begin{array}{ll}
           j,         &\,\,\,\,\,\,\ 1\leq j\leq k+1;\ l=1,\\
           2k-j+3,       &\,\,\,\,\,\,\ k+2\leq j\leq 2k+1;\ l=1,\\
           k-j+2,         &\,\,\,\,\,\,\ 1\leq j\leq k+1;\ l=k+1,\\
           j-k,       &\,\,\,\,\,\,\ k+2\leq j\leq 2k+1;\ l=k+1,\hphantom{aaaaaaaaaaaaaaaaaaaaaaaaaaaaaaaaaaaaaa}
            \end{array}
             \right.
$$
$$ d(c_j,a_l)=
\left\{
              \begin{array}{ll}
           j+1,         &\,\,\,\,\,\,\ 1\leq j\leq k+1;\ l=1,\\
           2k-j+4,       &\,\,\,\,\,\,\ k+2\leq j\leq 2k+1;\ l=1,\\
           k-j+3,         &\,\,\,\,\,\,\ 1\leq j\leq k+1;\ l=k+1,\\
           j-k+1,       &\,\,\,\,\,\,\ k+2\leq j\leq 2k+1;\ l=k+1,\hphantom{aaaaaaaaaaaaaaaaaaaaaaaaaaaaaaaaaaaaaa}
            \end{array}
             \right.
$$
$$ d(d_j,a_l)=
\left\{
              \begin{array}{ll}
           j+2,         &\,\,\,\,\,\,\ 1\leq j\leq k+1;\ l=1,\\
           2k-j+4,       &\,\,\,\,\,\,\ k+2\leq j\leq 2k+1;\ l=1,\\
           k-j+3,         &\,\,\,\,\,\,\ 1\leq j\leq k+1;\ l=k+1,\\
           j-k+2,       &\,\,\,\,\,\,\ k+2\leq j\leq 2k+1;\ l=k+1,\hphantom{aaaaaaaaaaaaaaaaaaaaaaaaaaaaaaaaaaaaaa}
            \end{array}
             \right.
$$
$$ d(e_j,a_l)=
\left\{
              \begin{array}{ll}
           j+3,         &\,\,\,\,\,\,\ 1\leq j\leq k+1;\ l=1,\\
           2k-j+5,       &\,\,\,\,\,\,\ k+2\leq j\leq 2k+1;\ l=1,\\
           k-j+4,         &\,\,\,\,\,\,\ 1\leq j\leq k+1;\ l=k+1,\\
           j-k+3,       &\,\,\,\,\,\,\ k+2\leq j\leq 2k+1;\ l=k+1.\hphantom{aaaaaaaaaaaaaaaaaaaaaaaaaaaaaaaaaaaaaa}
            \end{array}
             \right.
$$
\end{proof}

\begin{Lemma}\label{lem10}
For $n=2k,\ k\geq 2$, $W = \{c_1,v\}$ is a local resolving set of
$U_n$, where
$$ v =
\left\{
              \begin{array}{ll}
           e_1,         &\,\,\,\,\,\,\ \mbox{when}\,\ k=2,\\
           c_2,       &\,\,\,\,\,\,\ \mbox{when}\,\ k=3,\\
           d_4,         &\,\,\,\,\,\,\ \mbox{when}\,\ k=4,\\
           c_k,       &\,\,\,\,\,\,\ \mbox{when}\,\ k\geq 5.\hphantom{aaaaaaaaaaaaaaaaaaaaaaaaaaaaaaaaaaaaaa}
            \end{array}
             \right.
$$
\end{Lemma}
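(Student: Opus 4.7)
The plan is to follow the template of Lemma~\ref{lem9}: for each of the four subcases on $k$, tabulate the distances $d(x, c_1)$ and $d(x, v)$ for every $x = i_j$ with $i \in \{a, b, c, d, e\}$ and $1 \leq j \leq n$, and then invoke Proposition~\ref{prop1} by checking, edge type by edge type, that every adjacent pair $(u, w)$ of $U_n$ is distinguished by $c_1$ or by $v$.

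First I would compute $d(i_j, c_1)$. The vertex $c_1$ has neighbors $b_1, d_1, d_n$ in $U_n$, and the $c$- and $d$-vertices together form a $2n$-cycle $c_1, d_1, c_2, d_2, \ldots, c_n, d_n, c_1$ (using the edges $c_i d_i$ and $c_{i+1} d_i$). Consequently the distances from $c_1$ to the $c$- and $d$-layers are piecewise linear in $j$, symmetric about the index antipodal to $1$ on this $2n$-cycle, and distances to the $b$-, $a$-, and $e$-layers are obtained by adding $1$, $2$, and $1$ respectively after routing through the appropriate $b_j$ or $d_j$. Second, I would repeat this calculation for each choice of $v$: $e_1$ when $k = 2$, $c_2$ when $k = 3$, $d_4$ when $k = 4$, and $c_k$ when $k \geq 5$. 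The reason the statement splits into subcases is precisely that the natural symmetric choice $v = c_k$ fails to resolve a small number of adjacent pairs when $k$ is very small, so ad hoc substitutes must be inspected directly.

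Third, with both distance lists in hand, I would march through the eight edge families $a_i a_{i+1}$, $b_i b_{i+1}$, $e_i e_{i+1}$, $a_i b_i$, $b_i c_i$, $c_i d_i$, $d_i e_i$, $c_{i+1} d_i$ and verify in each case that the ordered pair $\bigl(d(u, c_1), d(u, v)\bigr)$ differs from $\bigl(d(w, c_1), d(w, v)\bigr)$. I expect most edges to be separated already by $c_1$; the ones that fail lie in the region of $U_n$ antipodal to $c_1$ on the $c$-$d$ cycle, and they are exactly the edges where the second landmark $v$ has been chosen to separate.

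The main obstacle is the case analysis itself. Because $U_n$ has five vertex orbits together with a $c$-$d$ cycle of length $2n$ whose "shortcut" $c_{i+1} d_i$ creates floor-function quirks in the distance formulas, writing down $d(\cdot, v)$ when $v$ sits on the $c$-$d$ cycle (as for $v = c_k$ or $v = d_4$) or on the $e$-layer (as for $v = e_1$) requires careful bookkeeping, especially for the boundary indices $j$ near $1$, $k$, $k{+}1$, and $n$. Once the tables are tabulated correctly, however, the verification for each edge family reduces to comparing two integers, and the argument becomes routine; no new graph-theoretic idea beyond those already used in Lemma~\ref{lem9} is required.
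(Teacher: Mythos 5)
Your plan is essentially the paper's own proof: the paper disposes of $k=2,3,4$ by direct inspection of the small graphs and, for $k\geq 5$, tabulates $d(\cdot,c_1)$ and $d(\cdot,c_k)$ over all five vertex classes and then checks neighbour by neighbour via Proposition~\ref{prop1}, exactly as in Lemma~\ref{lem9}. One caution for the execution: the shortest paths from $c_1$ to distant $c_j$ and $d_j$ do \emph{not} stay on the $c$--$d$ cycle of length $2n$ but cut through the $b$- and $e$-cycles (e.g.\ $d(c_1,c_j)=j+1$ rather than $2(j-1)$ for mid-range $j$), so the distance tables cannot be read off that cycle alone as your second paragraph suggests, though the piecewise-linearity and the symmetry about the antipodal index survive.
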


\begin{proof}
It is easy to see that the sets $\{c_1,e_1\}, \{c_1,c_2\}$ and
$\{c_1,d_4\}$ are local resolving sets for $U_2,U_3$ and $U_4$,
respectively. For $k\geq 5$, first we give the list of distances of
each vertex of $U_n$ with $c_1$ and $v = c_k$.
$$ d(a_j,c_l)=
\left\{
              \begin{array}{ll}
           j+1,         &\,\,\,\,\,\,\ 1\leq j\leq k;\ l=1,\\
           2k-j+3,       &\,\,\,\,\,\,\ k+1\leq j\leq 2k;\ l=1,\\
           k-j+2,         &\,\,\,\,\,\,\ 1\leq j\leq k;\ l=k,\\
           j-k+2,       &\,\,\,\,\,\,\ k+1\leq j\leq 2k;\ l=k,\hphantom{aaaaaaaaaaaaaaaaaaaaaaaaaaaaaaaaaaaaaa}
            \end{array}
             \right.
$$
$$ d(b_j,c_l) = d(a_j,c_j)-1,\,\ 1\leq j\leq 2k;\ \mbox{for both}\,\ l = 1,k,\hphantom{aaaaaaaaaaaaaaaaaaaaaaaaaaaaaaaaaaaaaa}$$
$$ d(c_j,c_l)=
\left\{
              \begin{array}{ll}
           d(b_j,c_l)-1,         &\,\,\,\,\,\,\ j=1;\ l=1,\\
           d(b_j,c_l),       &\,\,\,\,\,\,\ j=2,2k;\ l=1,\\
           d(b_j,c_l)+1,         &\,\,\,\,\,\,\ j=1,2,2k;\ l=k,\\
           d(b_j,c_l)+1,       &\,\,\,\,\,\,\ 3\leq j\leq k-2;\ l=1,k,\\
           d(b_j,c_l)+1,       &\,\,\,\,\,\,\ j=k-1,k,k+1;\ l=1,\\
           d(b_j,c_l),       &\,\,\,\,\,\,\ j=k-1,k+1,5;\ l=k,\\
           d(b_j,c_l)+1,       &\,\,\,\,\,\,\ k+2\leq j\leq 2k-1;\ l=1,k,\\
           d(b_j,c_l)-1,         &\,\,\,\,\,\,\ j=k;\ l=k.\hphantom{aaaaaaaaaaaaaaaaaaaaaaaaaaaaaaaaaaaaaa}
            \end{array}
             \right.
$$
$$ d(d_j,c_l)=
\left\{
              \begin{array}{ll}
           d(c_j,c_l)+1,         &\,\,\,\,\,\,\ 1\leq j\leq k;\ l=1,\\
           d(c_j,c_l),       &\,\,\,\,\,\,\ k+1\leq j\leq 2k-2;\ l=1,\\
           d(c_j,c_l)-1,       &\,\,\,\,\,\,\ j=2k-1,2k;\ l=1,\\
           d(c_j,c_l)-1,       &\,\,\,\,\,\,\ j=k-2,k-1;\ l=k,\\
           d(c_j,c_l),       &\,\,\,\,\,\,\ 1\leq j\leq k-3\ \wedge\ j=2k;\ l=k,\\
           d(c_j,c_l)+1,       &\,\,\,\,\,\,\ k\leq j\leq 2k-1;\ l=k,\hphantom{aaaaaaaaaaaaaaaaaaaaaaaaaaaaaaaaaaaaaa}
            \end{array}
             \right.
$$
$$ d(e_j,c_l)=
\left\{
              \begin{array}{ll}
           d(d_j,c_l)+1,         &\,\,\,\,\,\,\ j = 1,2k;\ l=1,\\
           d(d_j,c_l),       &\,\,\,\,\,\,\ j = 2,2k-1;\ l=1,\\
           d(d_j,c_l)-1,       &\,\,\,\,\,\,\ 3\leq j\leq 2k-2;\ l=1,\\
           d(d_j,c_l)-1,       &\,\,\,\,\,\,\ 1\leq j\leq k-3\ \wedge\ k+2\leq j\leq 2k;\ l=k,\\
           d(d_j,c_l),       &\,\,\,\,\,\,\ j=k-2,k+1;\ l=k,\\
           d(d_j,c_l)+1,       &\,\,\,\,\,\,\ j= k-1,k;\ l=k,\hphantom{aaaaaaaaaaaaaaaaaaaaaaaaaaaaaaaaaaaaaa}
            \end{array}
             \right.
$$

Now, it can be easily seen that for each $u\in N(i_j),\ 1\leq j\leq
2k+1$, either $d(u,c_1)\neq d(i_j,c_1)$ or $d(u,v)\neq d(i_j,v)$,
and hance, by Proposition \ref{prop1}, the subset $\{c_1,c_k\}$ of
$V(U_n)$ is a local resolving set of $U_n$.
\end{proof}

\begin{Lemma}\label{lem11}
For $n = 2k+1,\ k\geq 1$, if $S$ is a strong resolving set of $U_n$,
then $|S|\geq 2n$.
\end{Lemma}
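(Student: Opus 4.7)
The plan is to exhibit $2n$ vertex pairs in $U_n$, each of which forces at least one of its endpoints into any strong resolving set $S$, and then verify that the $2n$ resulting membership conditions are independent, so $|S|\geq 2n$. This follows the template set by Lemma \ref{lem5} and Lemma \ref{lem7}: apply Property \ref{pro2} to diameter pairs to obtain one batch of forced inclusions, and apply Property \ref{pro1} (via Lemma \ref{lem4}) to a disjoint batch of pairs for the rest.

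First I would compute $\mathrm{diam}(U_n)$ and display $n$ diameter pairs. Given the five layers $a,b,c,d,e$ and the odd $e$-cycle, the natural candidates are the pairs $(a_i,e_{i+k})$, which should realize the largest distance (roughly $k+4$: halfway around the $a$-cycle and then down through $b,c,d,e$). I would tabulate $d(a_i,e_j)$ case by case, taking into account that the diagonal edges $c_{i+1}d_i$ turn the $c$-$d$ layer into a $2n$-cycle and therefore create nontrivial shortcuts, and verify that the maximum is attained exactly at $j=i+k$. Property \ref{pro2} then forces $a_i\in S$ or $e_{i+k}\in S$ for every $i=1,\ldots,n$.

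Next I would identify a second family of $n$ pairs that satisfy conditions $(i)$ and $(ii)$ of Lemma \ref{lem4} while using vertices disjoint from $V_a\cup V_e$. The pairs $(b_i,d_{i+k})$ are the natural candidates: one checks that every neighbor of $b_i$, namely $a_i,a_{i+1},b_{i-1},b_{i+1},c_i$, lies at distance at most $d(b_i,d_{i+k})$ from $d_{i+k}$, and symmetrically every neighbor $c_{i+k},c_{i+k+1},d_{i+k-1},d_{i+k+1},e_{i+k}$ of $d_{i+k}$ lies at distance at most $d(b_i,d_{i+k})$ from $b_i$. With those inequalities verified, Property \ref{pro1} forces $b_i\in S$ or $d_{i+k}\in S$ for every $i=1,\ldots,n$.

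Finally I would combine the two conclusions. Since the first family only involves vertices of $V_a\cup V_e$ and the second only involves vertices of $V_b\cup V_d$, the $2n$ constraints use pairwise disjoint vertex sets, so one forced vertex per pair contributes $n+n=2n$ distinct elements to $S$, yielding the bound. The main obstacle will be Step~1 and the Lemma~\ref{lem4} verification in Step~2: the diagonals $c_{i+1}d_i$ make the $c$-$d$ distances subtle and can shorten paths that would otherwise be monotone, so a careful case analysis splitting on the position of $j$ relative to $i$ and $i+k$ will be unavoidable, just as in the distance lists established in Lemma \ref{lem9} and Lemma \ref{lem10}.
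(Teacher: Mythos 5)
Your first batch coincides with the paper's: the pairs $(a_i,e_{i+k})$ realize $diam(U_n)=k+4$, and Property \ref{pro2} forces $a_i\in S$ or $e_{i+k}\in S$ for each $i$. The gap is in your second batch. First, the neighborhoods you list belong to $S_n$, not $U_n$: in $U_n$ there is no edge $a_{i+1}b_i$, so $N(b_i)=\{a_i,b_{i-1},b_{i+1},c_i\}$, and the $d_j$'s are pairwise non-adjacent, with $N(d_j)=\{c_j,c_{j+1},e_j\}$ (the $c$'s and $d$'s together form a $2n$-cycle via the diagonals $c_{j+1}d_j$). Second, and fatally, the pair $(b_i,d_{i+k})$ does not satisfy condition $(i)$ of Lemma \ref{lem4}: one computes $d(b_i,d_{i+k})=k+2$, while the neighbor $a_i$ of $b_i$ has $d(a_i,d_{i+k})=k+3$, since any path from the $a$-layer to the $d$-layer must first pass through a $b$-vertex. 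Because $d(a_i,d_{i+k})=d(a_i,b_i)+d(b_i,d_{i+k})$, the vertex $a_i$ itself strongly resolves $b_i$ and $d_{i+k}$; hence this pair forces neither of its endpoints into $S$, and your second set of $n$ constraints evaporates.

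The repair is the paper's choice of second family: the pairs $(c_i,d_{i+k})$. There $d(c_i,d_{i+k})=k+3$ for $k\geq 2$, every neighbor $b_i,d_i,d_{i-1}$ of $c_i$ lies at distance at most $k+3$ from $d_{i+k}$, and every neighbor $c_{i+k},c_{i+k+1},e_{i+k}$ of $d_{i+k}$ lies at distance at most $k+3$ from $c_i$ (the shortcuts through the $e$-cycle and through the diagonals $c_{j+1}d_j$ are exactly what make these checks nontrivial, as you anticipated). Property \ref{pro1} then gives $c_i\in S$ or $d_{i+k}\in S$ for all $i$. Your final counting step --- two families supported on the disjoint vertex sets $V_a\cup V_e$ and $V_c\cup V_d$, each family consisting of $n$ pairwise disjoint pairs --- is correct and carries over verbatim once $V_b$ is replaced by $V_c$.
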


\begin{proof}
Consider the pair $(a_i,e_{i+k}),\ i=1,2,\ldots,n$, of vertices of
$U_n$. Then it is easy to see that $d(a_i,e_{i+k})=k+4$. As
$diam(U_n)=k+4$, so according to the Property \ref{pro2}, $a_i\in S$
or $e_{i+k}\in S$ for all $i = 1,2,\ldots,n$. Further, the vertices
in the pair $(c_i,d_{i+k}),\ i = 1,2,\ldots,n$ satisfy both the
conditions of Lemma \ref{lem4}, so according to the Property
\ref{pro1}, $c_i\in S$ or $d_{i+k}\in S$ for all $i=1,2,\dots,n$.
Therefore $|S|\geq 2n$.
\end{proof}

\begin{Lemma}\label{lem12}
For $n = 2k+1,\ k\geq 1$, the subset $\{a_i, c_{i}\ |\ i =
1,2,\ldots,n\}$ of $V(U_n)$ is a strong resolving set of $U_n$.
\end{Lemma}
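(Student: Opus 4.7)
The plan is to verify that for every pair of distinct vertices $u,v\in V(U_n)$, some element of $S=\{a_i,c_i\ |\ 1\le i\le n\}$ strongly resolves $(u,v)$. Pairs that contain an element of $S$ are trivially strongly resolved by that element, since it is an endpoint of every shortest path from itself to the other vertex (hence lies on such a path). It therefore suffices to treat pairs $(u,v)$ with $u,v\in V_b\cup V_d\cup V_e$. Using the rotational symmetry $i\mapsto i+1$ of $U_n$, for each such pair with indices $i$ and $j$ I set $t=(j-i)\bmod n$ and may assume $0\le t\le k$, swapping the roles of the two vertices otherwise.

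For each pair type I would exhibit an $s\in S$ together with an explicit shortest path from $s$ that visits both vertices of the pair in order, so that the closer vertex lies on a shortest $s$-to-far path and strong resolution follows by definition. The natural witnesses are: $(b_i,b_j)$ by $a_i$ along $a_i,b_i,b_{i+1},\ldots,b_j$; $(e_i,e_j)$ by $a_i$ along $a_i,b_i,c_i,d_i,e_i,e_{i+1},\ldots,e_j$; $(d_i,d_j)$ by $c_i$ along $c_i,d_i,c_{i+1},d_{i+1}$ when $t=1$ or along $c_i,d_i,e_i,e_{i+1},\ldots,e_j,d_j$ when $t\ge 2$; $(b_i,e_j)$ and $(b_i,d_j)$ by $a_i$ via the descending path through slice $i$, namely $a_i,b_i,c_i,d_i,e_i,\ldots$ on the forward side and $a_i,b_i,c_i,d_{i-1},e_{i-1},\ldots$ or $a_i,b_i,b_{i-1},\ldots,b_j,c_j,d_j$ on the backward side; and $(d_i,e_j)$ by $c_i$ along $c_i,d_i,e_i,\ldots,e_j$ on the forward side but by $c_{i+1}$ along $c_{i+1},d_i,e_i,e_{i-1},\ldots,e_j$ on the backward side. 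The length of each chosen path matches the distance formulas from Lemma \ref{lem9} (and their analogues involving $c$- and $d$-vertices), confirming shortness.

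The main obstacle is the case analysis, complicated by a mild asymmetry caused by the edges $c_{i+1}d_i$: shortest paths from $a_i$ behave differently in the forward and backward directions, e.g., $d(a_i,e_{i+1})=5$ while $d(a_i,e_{i-1})=4$. This forces an occasional shift from $a_i$ to $a_{i+1}$, or from $c_i$ to $c_{i+1}$, so that the prescribed intermediate vertex still lies on the chosen shortest path. Because $n=2k+1$ is odd, the forward range $\{i,i+1,\ldots,i+k\}$ and the backward range $\{i,i-1,\ldots,i-k\}$ together cover all indices exactly, so combining the forward and backward arguments leaves no pair untreated. Once all these case checks are carried out, $\{a_i,c_i\ |\ 1\le i\le n\}$ is established as a strong resolving set of $U_n$.
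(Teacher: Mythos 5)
Your proposal takes essentially the same route as the paper's proof: after the trivial reduction to pairs among the $b$-, $d$- and $e$-vertices, both arguments exhibit for each pair type an explicit shortest path from $a_i$ (for pairs containing $b_i$) or from $c_i$/$c_{i+1}$ (for pairs among $d$- and $e$-vertices) that passes through the nearer vertex of the pair, and your witnesses and paths match those in the paper's Table \ref{table2} and path lists. One detail to correct when writing it out: for a forward pair $(b_i,d_j)$ the witness path must descend at slice $j$, namely $a_i,b_i,b_{i+1},\ldots,b_j,c_j,d_j$ of length $t+3$, not at slice $i$ --- since $U_n$ has no edge $d_id_{i+1}$, the route $a_i,b_i,c_i,d_i,e_i,\ldots,e_j,d_j$ has length $t+5$ and is not shortest --- but this is exactly the path you already use on the backward side, so the fix is immediate.
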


\begin{proof}
First we prove that for each $i=1,2,\ldots,n$, the vertex $a_i$
strongly resolves the pairs $(b_i,b_j), (b_i,d_j)$ and $(b_i,e_j)$
for all $j=1,2\ldots,n$. For this, let us consider the shortest
$a_i-v_j$ paths shown in Table \ref{table2}, where $v\in \{b,d,e\}$.

\begin{table}
 \begin{center}
\begin{tabular}{|c|c|}
  \hline
  % after \\: \hline or \cline{col1-col2} \cline{col3-col4} ...
  for $i\leq j\leq i+k$ & for $i+k\leq j\leq i+n-1$ \\
  \hline
  $a_i,b_i,b_{i+1},\ldots,b_j$ & $b_{j+1},b_{j+2},\ldots,b_{i-1},b_i,a_i$ \\
  $a_i,b_i,b_{i+1},\ldots,b_j,c_j,d_j$ & $d_j,c_{j+1},b_{j+1},b_{j+2},\ldots,b_{i-1},b_i,a_i$ \\
  $a_i,b_i,b_{i+1},\ldots,b_j,c_j,d_j,e_j$ & $e_j,d_j,c_{j+1},b_{j+1},b_{j+2},\ldots,b_{i-1},b_i,a_i$ \\
  \hline
\end{tabular}
 \end{center}
 \caption{Shortest $a_i-v_j$ paths $(v\in
\{b,d,e\})$}\label{table2}
\end{table}

Then each pair $(b_i,b_j), (b_i,d_j)$ and $(b_i,e_j)$ is strongly
resolved by $a_i$ because $b_i$ belongs to each shortest $a_i-v_j$
path listed in Table \ref{table2}, where $v\in \{b,d,e\}$.

Moreover, we note that each pair $(d_i,d_j), (d_i,e_j)$ and
$(e_i,e_j)$ is strongly resolved by $c_i\ (1\leq i\leq n)$ for $j =
i,i+1,\ldots,i+k$, and strongly resolved by $c_{i+1}$ for
$j=i+k+1,\ldots, i+n-1$, because of the following shortest $c_i-v_j$
and $c_{i+1}-v_j$ paths $(v\in \{d,e\})$, which contains the
vertices $d_i$ and $e_i$:\\
\indent $\bullet\ c_i,d_i,c_{i+1},d_{i+1};$\\
\indent $\bullet\ d_{i+n-1},c_i,d_i,c_{i+1};$\\
\indent $\bullet\ c_i,d_i,e_i,e_{i+1},\ldots,e_j,\,\ j =
i,i+1,\ldots, i+k;$\\
\indent $\bullet\ c_i,d_i,e_i,e_{i+1},\ldots,e_j,d_j,\,\ j =
i+2,i+3,\ldots, i+k;$\\
\indent $\bullet\ e_j,e_{j+1},\ldots,e_i,d_i,c_{i+1},\,\ j =
i+k+1,\ldots, i+n-1;$\\
\indent $\bullet\ d_j,e_j,e_{j+1},\ldots,e_i,d_i,c_{i+1},\,\ j =
i+k+1,\ldots, i+n-2.$
\end{proof}

\begin{Lemma}\label{lem13}
For $n = 2k,\ k\geq 2$, if $S$ is a strong resolving set of $U_n$,
then $|S|\geq \frac{5n}{2}$.
\end{Lemma}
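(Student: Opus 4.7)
The plan is to mimic the strategy used for Lemma \ref{lem7} (the even case for $S_n$) and Lemma \ref{lem11} (the odd case for $U_n$): I will exhibit three families of vertex-pairs whose underlying vertex-sets are pairwise disjoint, and apply Property \ref{pro2} or Property \ref{pro1} to each family to pick up the required number of forced elements of $S$. Since the three families live on disjoint vertex types, the lower bounds simply add.

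First I would treat the $n$ pairs $(a_i, e_{i+k})$ for $i=1,2,\ldots,n$. A direct computation of a shortest path $a_i,b_i,c_i,d_i,e_i,e_{i\pm 1},\ldots,e_{i+k}$ gives $d(a_i,e_{i+k}) = k+4$, which matches $diam(U_n)$ in the even case; consequently Property \ref{pro2} forces $a_i\in S$ or $e_{i+k}\in S$ for every $i$, contributing at least $n$ distinct vertices drawn from $V_a\cup V_e$.

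Next I would verify that the $n$ pairs $(c_i,d_{i+k})$ for $i=1,2,\ldots,n$ satisfy both conditions of Lemma \ref{lem4}. Concretely, I would list $N(c_i)=\{b_i,c_{i-1},c_{i+1},d_{i-1},d_i\}$ and $N(d_{i+k})=\{c_{i+k},c_{i+k+1},d_{i+k-1},d_{i+k+1},e_{i+k}\}$ and check, using the distance table the preceding lemmas already relied on, that each such neighbour stays within the distance $d(c_i,d_{i+k})$ to the opposite endpoint. Property \ref{pro1} then forces $c_i\in S$ or $d_{i+k}\in S$ for every $i$, contributing at least another $n$ distinct vertices, this time from $V_c\cup V_d$. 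Finally, to obtain the extra $k$ elements needed to reach $\tfrac{5n}{2}$, I would use the antipodal pairs $(b_i,b_{i+k})$ on the cycle $C_b$ for $i=1,2,\ldots,k$. Because $n=2k$ is even these are honest antipodal pairs, so each neighbour of $b_i$ lies on a shortest $b_i$--$b_{i+k}$ half-cycle and thus has distance at most $d(b_i,b_{i+k})=k$ to $b_{i+k}$ (and symmetrically); this gives conditions (i) and (ii) of Lemma \ref{lem4}, so Property \ref{pro1} yields $b_i\in S$ or $b_{i+k}\in S$ for each $i=1,\ldots,k$, i.e.\ $k$ additional vertices, all in $V_b$.

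Since $V_a\cup V_e$, $V_c\cup V_d$ and $V_b$ are pairwise disjoint, adding the three contributions gives $|S|\geq n+n+k=\tfrac{5n}{2}$, as required. The main obstacle will be the bookkeeping for the Lemma \ref{lem4} verification of the pairs $(c_i,d_{i+k})$ and $(b_i,b_{i+k})$: one has to enumerate the (up to five) neighbours on each side and, because $n$ is even, rule out the ``overshoot'' that the two equally-short antipodal halves of the cycles $C_b$ and $C_e$ could in principle produce a neighbour strictly farther from the opposite endpoint. Once these inequalities are checked — which amounts to routine case analysis on $j$ modulo $n$, entirely analogous to the computations performed in the proof of Lemma \ref{lem3} — the three applications of Property \ref{pro2} and Property \ref{pro1} combine to deliver the bound.
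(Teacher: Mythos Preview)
There is a genuine gap in your third family. The pair $(b_i,b_{i+k})$ does \emph{not} satisfy the hypotheses of Lemma~\ref{lem4}. You argue that ``each neighbour of $b_i$ lies on a shortest $b_i$--$b_{i+k}$ half-cycle,'' but $b_i$ also has neighbours \emph{off} the cycle $C_b$, namely $a_i$ and $c_i$. For these one has
\[
d(a_i,b_{i+k}) \;=\; d(c_i,b_{i+k}) \;=\; k+1 \;>\; k \;=\; d(b_i,b_{i+k}),
\]
since any path from $a_i$ (resp.\ $c_i$) to $b_{i+k}$ must first drop to the $b$-layer and then traverse $k$ edges of $C_b$. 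Thus condition~(i) of Lemma~\ref{lem4} fails. In fact $a_i$ \emph{strongly resolves} $(b_i,b_{i+k})$, because $d(a_i,b_{i+k})=d(a_i,b_i)+d(b_i,b_{i+k})$; hence Property~\ref{pro1} gives no constraint on $S\cap V_b$, and the extra $k$ vertices you need simply do not materialise. The paper avoids this by taking the antipodal $d$-pairs $(d_j,d_{j+k})$, $1\le j\le k$, for the third family: the off-layer neighbours of $d_j$ are $c_j,c_{j+1},e_j$, each of which is at distance at most $k+2=d(d_j,d_{j+k})$ from $d_{j+k}$, so Lemma~\ref{lem4} genuinely applies there.

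There is also a minor slip in your first family: for even $n=2k$ the edge $c_id_{i-1}$ gives the shortcut $a_i,b_i,c_i,d_{i-1},e_{i-1},e_{i-2},\ldots,e_{i+k}$ of length $k+3$, so $d(a_i,e_{i+k})=k+3$, not $k+4$. Since $diam(U_n)=k+3$ as well, Property~\ref{pro2} still applies and this part of your argument survives with the corrected value. Note finally that your second family $(c_i,d_{i+k})$ differs from the paper's $(c_i,d_{i+2})$, and that the paper's second and third families both live in $V_c\cup V_d$; your instinct to place the third family on a vertex type disjoint from the first two was sound, but the $b$-layer cannot supply such pairs in $U_n$.
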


\begin{proof}
Consider the pair $(a_i,e_{i+k}),\ i=1,2,\ldots,n$, of vertices of
$U_n$. Then $d(a_i,e_{i+k})=k+3$. Since $diam(U_n)=k+3$ so according
to the Property \ref{pro2}, $a_i\in S$ or $e_{i+k}\in S$ for all $i
= 1,2,\ldots,n$. Moreover, the vertices in the pairs $(c_i,d_{i+2})$
and $(d_j,d_{j+k})$,\ $1\leq i\leq n,\ 1\leq j\leq k$, satisfy both
the conditions of Lemma \ref{lem4}, so according to the Property
\ref{pro1}, $c_i\in S$ or $d_{i+2}\in S$ for all $i=1,2,\dots,n$ and
$d_j\in S$ or $d_{j+k}\in S$ for all $j = 1,2,\ldots,k$. Therefore
$|S|\geq n+n+k = \frac{5n}{2}$.
\end{proof}

\begin{Lemma}\label{lem14}
For $n = 2k,\ k\geq 2$, the subset $\{a_i, c_i, d_{i'}\ |\ i =
1,2,\ldots,n;\ i'=1,2,\ldots,k\}$ of $V(U_n)$ is a strong resolving
set of $U_n$.
\end{Lemma}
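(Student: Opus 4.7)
The plan is to show that $S=\{a_i:1\le i\le n\}\cup\{c_i:1\le i\le n\}\cup\{d_{i'}:1\le i'\le k\}$ strongly resolves every pair of distinct vertices of $U_n$. Since $|S|=2n+k=\tfrac{5n}{2}$, this together with Lemma~\ref{lem13} will establish the even case of Theorem~\ref{th4}. The first observation is that if either endpoint of a pair $\{u,v\}$ lies in $S$, then that endpoint itself strongly resolves the pair (taking $w=u$, the vertex $u$ trivially lies on the shortest $u$--$v$ path). Hence the work reduces to pairs both of whose endpoints lie in
$$V(U_n)\setminus S \;=\; \{b_i\}_{i=1}^{n}\,\cup\,\{d_i\}_{i=k+1}^{n}\,\cup\,\{e_i\}_{i=1}^{n}.$$

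For pairs of the form $(b_i,x_j)$ with $x\in\{b,d,e\}$, I would copy the Table~\ref{table2} strategy from the odd case (Lemma~\ref{lem12}): for each such pair there is a shortest path from a suitable $a_l\in S$ that starts with the edge $a_lb_l$, traverses the $b$-cycle to $b_j$, and then drops down through $c_j,d_j,e_j$ as needed, thereby passing through $b_i$. Choosing $l=i$ when $j$ lies in the forward half-circle from $i$ and $l=i+1$ otherwise (with the usual swap when the shortest path goes the other way around the cycle) exhibits an $a_l\in S$ that strongly resolves every such pair. For pairs of the form $(d_i,d_j),(d_i,e_j),(e_i,e_j)$ that stay within a single half-circle, I would adapt the bullet list from Lemma~\ref{lem12} and use $c_i$ for the forward half-circle $j\in\{i,i+1,\dots,i+k-1\}$ and $c_{i+1}$ for the backward half, since a shortest $c_l$--$v_j$ path can be routed through $d_l$ and $e_l$ in the usual way.

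What is genuinely new in the even case is the family of ``antipodal'' pairs $(e_i,e_{i+k})$, which neither $c_l$ nor $c_{l+1}$ can strongly resolve because both candidate resolvers are equidistant from the two endpoints in $U_{2k}$. Here the third block of $S$ does the work: if $i\le k$, the shortest path $d_i\to e_i\to e_{i+1}\to\cdots\to e_{i+k}$ has length $k+1$ and passes through $e_i$, so $d_i\in S$ strongly resolves $(e_i,e_{i+k})$; if $k<i\le n$, the same argument applied with $i'=i-k\in\{1,\dots,k\}$ shows that $d_{i'}\in S$ strongly resolves $(e_{i'},e_{i'+k})=(e_i,e_{i+k})$ (indices mod $n$).

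The main obstacle in executing this plan is the bookkeeping: one must verify that the three families above exhaust all remaining pairs in $V(U_n)\setminus S$. The most delicate are the pairs $(d_i,d_j)$ and $(d_i,e_j)$ with $i,j>k$, where the outer symmetry of $U_{2k}$ can make several candidate $c_l$-resolvers fail simultaneously; for any such residual case I would fall back on a $d_{i'}\in S$ with $i'\le k$, using a shortest path along the $c$--$d$ chain that enters the ``deep'' region via one of the two endpoints. The detailed distance list (analogous to the one assembled in Lemma~\ref{lem10}) makes each verification routine, but the enumeration is lengthy and is where the bulk of the written proof will go.
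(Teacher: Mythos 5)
Your proposal follows essentially the same route as the paper's proof: reduce to pairs inside $V(U_n)\setminus S$, resolve the $b$-pairs by suitable $a_l$'s via the half-circle path table, resolve the generic $d$/$e$-pairs by $c_i$ or $c_{i+1}$, and use the block $d_1,\dots,d_k$ for the antipodal pairs $(e_i,e_{i+k})$ and the residual pairs $(d_i,e_{i+k})$. The only slip is that for $(d_i,e_{i+k})$ with $i>k$ the witnessing shortest $d_{i+k}$--$d_i$ path runs around the $e$-cycle (length $k+2$) rather than along the $c$--$d$ chain (length $2k$), but the resolver $d_{i+k}\in S$ you point to is exactly the one the paper uses.
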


\begin{proof}
First we prove that for each $i=1,2,\ldots,n$, the vertex $a_i$
strongly resolves the pairs $(b_i,b_j), (b_i,e_j)$, $1\leq j\leq n$,
and the pair $(b_i,d_j)$ for all $j=k+1,k+2\ldots,n$. For this, let
us consider the shortest $a_i-v_j$ paths shown in Table
\ref{table3}, where $v\in \{b,d,e\}$. Then each pair $(b_i,b_j),
(b_i,d_j)$ and $(b_i,e_j)$ is strongly resolved by $a_i$ because
$b_i$ belongs to each shortest $a_i-v_j$ path listed in Table
\ref{table3}, where $v\in \{b,d,e\}$.

Now, consider the following shortest paths:\\
\indent $\bullet\ c_i,d_i,c_{i+1},d_{i+1},\,\ i = k+1,k+2,\ldots,n;$\\
\indent $\bullet\ d_{i+n-1},c_i,d_i,c_{i+1},\,\ i = k+1,k+2,\ldots,n;$\\
\indent $\bullet\ c_i,d_i,e_i,e_{i+1},\ldots,e_j,\,\ j =
i,i+1,\ldots, i+k-1;$\\
\indent $\bullet\ c_i,d_i,e_i,e_{i+1},\ldots,e_j,d_j,\,\ i =
k+1,k+2,\ldots,n-1;\,\ j =
i+1,i+2,\ldots,i+n-1;$\\
\indent $\bullet\ e_j,e_{j+1},\ldots,e_i,d_i,c_{i+1},\,\ j =
i+k+1,\ldots, i+n-1;$\\
\indent $\bullet\ d_j,e_j,e_{j+1},\ldots,e_i,d_i,c_{i+1},\,\ i =
k+1,k+2,\ldots,n;\,\ j = i+k+1,\ldots, i+n;$\\
\indent $\bullet\ d_i,e_i,e_{i+1},\ldots,e_{i+k},\,\
i=1,2,\ldots,k;$\\
\indent $\bullet\ c_i,d_i,e_i,e_{i+1},\ldots,e_j,d_j,\,\ i =
k+1,k+2,\ldots,n;\,\ j =
i,i+1,\ldots,i+k-1;$\\
\indent $\bullet\ d_{i+k},e_{i+k},e_{i+k+1},\ldots,
e_{i-1},e_i,d_i,\,\ i=k+1,k+2,\ldots,n$.\\
Then it is easy to see that the pairs $(d_i,d_j);\ k+1\leq i\leq
n-1, i+1\leq j\leq n$, $(d_i,e_j);\ k+1\leq i\leq n, i\leq j\leq
i+k-1$, $(e_i,e_j);\ 1\leq i\leq n, i+1,\leq j\leq i+k-1$ are
strongly resolved by $c_i$; the pairs $(d_i,e_j);\ k+1\leq i\leq
n-1, i+k+1\leq j\leq i+n-1$, $(e_i,e_j);\ 1\leq i\leq n, i+k+1\leq
j\leq i+n-1$ are strongly resolved by $c_{i+1}$; the pair
$(d_i,e_{i+k});\ k+1\leq i\leq n$ is strongly resolved by $d_{i+k}$;
and the pair $(e_i,e_{i+k});\ 1\leq i\leq k$ is strongly resolved by
$d_i$, because the above listed shortest $c_i-v_j,\ c_{i+1}-v_j;\
v\in \{d,e\}$, $d_i-e_{i+k}$ and $d_{i+k}-d_i$ paths contains the
vertices $d_i,e_i$ and $e_{i+k}$.

\begin{table}
 \begin{center}
\begin{tabular}{|c|c|}
  \hline
  % after \\: \hline or \cline{col1-col2} \cline{col3-col4} ...
  for $i\leq j\leq i+k-1$ & for $i+k\leq j\leq i+n-1$ \\
  \hline
  $a_i,b_i,b_{i+1},\ldots,b_j$ & $b_{j+1},b_{j+2},\ldots,b_{i-1},b_i,a_i$ \\
  $a_i,b_i,b_{i+1},\ldots,b_j,c_j,d_j$ & $d_j,c_{j+1},b_{j+1},b_{j+2},\ldots,b_{i-1},b_i,a_i$ \\
  $a_i,b_i,b_{i+1},\ldots,b_j,c_j,d_j,e_j$ & $e_j,d_j,c_{j+1},b_{j+1},b_{j+2},\ldots,b_{i-1},b_i,a_i$ \\
  \hline
\end{tabular}
 \end{center}
 \caption{Shortest $a_i-v_j$ paths $(v\in
\{b,d,e\})$}\label{table3}
\end{table}
\end{proof}

\noindent {\it Proof of Theorem \ref{th3}.}\ As $lmd(G) = 1$ if and
only if $G$ is a bipartite graph \cite{okamoto} and $U_n$ is not a
bipartite graph, so Lemma \ref{lem9} and Lemma \ref{lem10} conclude
that $lmd(U_n) =
2$.\hphantom{aaaaaaaaaaaaaaaaaaaaaaaaaaaaaaaaaaaaaaaaaaaaaaaaaaaaaaaaaaaaaaaaaaaaaa}$\square$
\\
\\
\noindent {\it Proof of Theorem \ref{th4}.}\ When $n$ is odd, then
Lemma \ref{lem11} and Lemma \ref{lem12} conclude the proof; and when
$n$ is even, then Lemma \ref{lem13} and Lemma \ref{lem14} conclude
the
proof.\hphantom{aaaaaaaaaaaaaaaaaaaaaaaaaaaaaaaaaaaaaaaaaaaaaaaaaaaaaaaaaaaaaaaaaa}$\square$
%----------------------------------------------------------------------------

\end{document}